\def\cd{\stackrel{d}{\longrightarrow}} 
\def\Rbb{\mathbb{R}}
\def\<{\langle}
\def\>{\rangle}
\def\st{\text{\rm s.\,t.\ }}
\newtheorem{theorem}{Theorem}
\newtheorem{lemma}{Lemma}
\newtheorem{remark}{Remark}
\def\0{\mbox{\bf 0}}
\def\1{\mbox{\bf 1}}
\def\2{\mbox{\bf 2}}
\def\3{\mbox{\bf 3}}
\def\4{\mbox{\bf 4}}
\def\5{\mbox{\bf 5}}
\def\6{\mbox{\bf 6}}
\def\7{\mbox{\bf 7}}
\def\8{\mbox{\bf 8}}
\def\9{\mbox{\bf 9}}
\def\b{{\bm b}}
\def\s{{\bm s}}
\def\v{{\bm v}}
\def\x{{\bm x}}
\def\z{{\bm z}}
\title{Robust Estimation under Heavy Contamination using\\ Enlarged Models}
\date{}
\author{T. Kanamori and H. Fujisawa}
\begin{document}
\maketitle

\noindent

\begin{abstract}
In data analysis, contamination caused by outliers is inevitable, and
robust statistical methods are strongly demanded. 
In this paper, our concern is to develop a new approach for robust data analysis 
based on scoring rules. 
The scoring rule is a discrepancy measure to assess the quality of probabilistic forecasts. 
We propose a simple way of estimating not only the parameter in the statistical model but
also the contamination ratio of outliers. 
Estimating the contamination ratio is important, since 
one can detect outliers out of the training samples based on the estimated contamination ratio. 
For this purpose, we use scoring rules with an extended statistical models, 
that is called the enlarged models. 
Also, the regression problems are considered. 
We study a complex heterogeneous contamination, in which 
the contamination ratio of outliers in the dependent variable may depend on the independent variable. 
We propose a simple method to obtain a robust regression estimator under 
heterogeneous contamination. 
In addition, we show that our method provides also an estimator of the expected
contamination ratio that is available to detect the outliers out of training samples. 
Numerical experiments demonstrate the effectiveness of our methods compared to the
conventional estimators. 
\end{abstract}

\section{Introduction}
\label{sec:Introduction}
In the big data era, robust data analysis is becoming more important than before. 
Nowadays, collecting a large dataset such as the data on the web 
is an easily task, while the quality of data may not be properly controlled. 
In such dataset, contamination caused by outliers such as 
incorrectly measured, or mis-recorded samples will be inevitable. 
Hence, robust statistical methods are demanded to extract valuable information from  
ubiquitous contaminated data. 
The concept of outliers is elusive, and it will be difficult to establish a reliable
statistical model for outliers. 
Hence, robust statistical methods are expected to automatically reduce the effect of
outliers. 

Robust statistics has a long history, and a lot of promising estimators 
were proposed. 
It is well known that the maximum likelihood estimator (MLE) suffers from 
a detrimental effect from outliers. 
The MLE can have a large bias even under a single erroneous observation. 
Many robust estimators were developed to reduce the bias induced by outliers. 
The statistical properties of robust estimators were deeply investigated 
by developing useful concepts such as the influence function, gross-error sensitivity,
break-down point, and so forth; 
see \cite{Hampel_etal86,huber64:_robus,maronna06:_robus_statis} for details. 

A way to reduce the effect of outliers is to employ weighted estimators, 
in which weight is introduced on each training sample. 
In the estimation procedure, the weight on the outlier is automatically reduced to make
the estimator stable.  
The weighted estimators are regarded as an extension of the MLE that has a constant
weight. 
Basu et al.~\cite{a.98:_robus_effic_estim_minim_densit_power_diver,
basu10:_statis_infer,jones01:_compar}
proposed the robust estimator based on the density-power weight, 
and Fujisawa and Eguchi~\cite{fujisawa08:_robus} 
introduced another type of weighting scheme to 
deal with heavily contaminated data. 

The weighted estimators are closely related to the scoring rules 
defined on the set of probability densities. 
The scoring rule is a quantity to assess the quality of 
probabilistic forecasts~\cite{gneiting07:_stric_proper_scorin_rules_predic_estim}. 
The MLE corresponds to the Kullback-Leibler score, 
and the weighted estimators introduced in the above are derived from 
the density-power score or gamma-score. 
From the standpoint of scoring rules, 
a unified framework of weighted estimators is recently presented 
by Kanamori and Fujisawa~\cite{kanamoriar:_affin_invar_diver_compos_scores_applicy}, 
in which a new class of scoring rule called H\"{o}lder score was proposed. 

In this paper, our concern is to develop a new approach for robust data analysis 
based on scoring rules. 
Usually, the scoring rule is defined as a functional on the set of probability densities. 
However, there are a lot of scoring rules that can be defined over a set of non-negative
functions. 
Exploiting such scoring rules, we propose a simple way of estimating not only the
parameter in the statistical model but also the contamination ratio of outliers. 
Estimating the contamination ratio is important to detect outliers out of the training 
samples. Indeed, one can identify the outliers by picking up the estimated number of training
samples in ascending order of the estimated value of the target probability density. 
For this purpose, we use scoring rules with an enlarged extension of statistical
models, that is called the enlarged models. 

We apply the proposed method to regression problems. 
For each independent variable $x$, the dependent variable $y$ 
may be contaminated. 
When the contamination ratio on $y$ does not depend on $x$, 
i.e., the situation of homogeneous contamination, 
the problem is almost the same as the robust estimation of the probability density. 
On the other hand, when the contamination ratio depends on $x$, i.e., 
the heterogeneous contamination, the situation is rather complex. 
We propose a simple method to obtain a robust regression estimator under 
heterogeneous contamination. In addition, our method provides the estimator of the
contamination ratio to detect the outliers out of training samples. 
In our approach, a scoring rule is used with an enlarged location-scale models. 
We prove that our methods has a small bias even under complex 
heterogeneous contamination. 
Moreover, we show that our estimator efficiently works even when 
both independent and dependent variables are heavily contaminated. 

The remainder of the article is organized as follows. 
In Section~\ref{sec:ScoringRules}, we introduce some scoring rules for the statistical
inference. 
In Section~\ref{eqn:Scale_Models_Heavy_Contaminations}, 
we propose statistical methods using enlarged models. 
We demonstrate how our estimator works to estimate not only the model parameter but also
the contamination ratio. 
In Section~\ref{sec:Regression_Problems}, 
the proposed method is applied to regression problems. 
We show that our approach efficiently works even under heterogeneous contamination. 
To confirm the practical efficiency of our methods, we present some numerical experiments in 
Section~\ref{sec:Numerical_Experiments}. 
In Section~\ref{sec:Conclusion}, we close this article with a discussion of the 
possibility of the newly introduced estimation methods. 
Technical calculations and proofs are found in the appendix. 

Let us summarize the notations to be used throughout the paper: 
Let $\Rbb$ be the set of all real numbers, and $\Rbb^n$ denotes the $n$-dimensional
Euclidean space. The univariate normal distribution with the mean $\mu$ and variance
$\sigma^2$ is denoted as $N(\mu,\sigma^2)$, and the $d$-dimensional multivariate normal
distribution with the mean vector $\mu$ and variance-covariance matrix $\Sigma$ is
expressed as $N_d(\mu,\Sigma)$. 
For the function $f(x)$, the integral $\int{}f(x)dx$ is often denoted as $\<f\>$.

\section{Scoring Rules}
\label{sec:ScoringRules}
Scoring rule is a class of discrepancy measures between two probability distributions, and 
it is widely used to statistical 
inference~\cite{brier50:_verif,%
gneiting07:_stric_proper_scorin_rules_predic_estim,%
good71:_commen_measur_infor_uncer_r,%
murata04:_infor_geomet_u_boost_bregm_diver,%
parry12:_proper_local_scorin_rules}. 
In this section, we briefly introduce some scoring rules: the density-power score,
pseudo-spherical score, and H\"{o}lder score, and show some statistical properties. 
The density-power score and pseudo-spherical score are used for robust parameter
estimation. 
The H\"{o}lder score is a class of extended scoring rules including these scoring rules.

\subsection{Density-power score and pseudo-spherical score}
\label{subsec:density-power-pseudo-spherical}
First of all, we briefly review the scoring rules. 
See \cite{gneiting07:_stric_proper_scorin_rules_predic_estim} for details. 
Let $p(x)$ and $q(x)$ be probability densities on the Euclidean space $\Rbb^k$, and
$\ell(x,q)$ be a real-valued function of the point $x\in\Rbb^k$ 
and probability density $q$. 
For the probability densities $p$ and $q$, 
the scoring rule is a real-valued function $S(p,q)$ expressed as 
\begin{align*}
 S(p,q)=\int{\!}p(x)\,\ell(x,q)dx. 
\end{align*}
The scoring rule is said to be  \emph{proper}, 
if the inequality $S(p,q)\geq{}S(p,p)$ holds for arbitrary probability densities 
$p$ and $q$ as long as the integral exists.  Moreover, 
if the equality $S(p,q)=S(p,p)$ leads to $p=q$ almost surely, 
$S(p,q)$ is called the \emph{strictly} proper scoring rule. 
The strictly proper scoring rule $S(p,q)$ defines the divergence $D(p,q)=S(p,q)-S(p,p)$,
that is an extension of squared distance measures on the space of probability densities. 
One of the most popular strictly proper scoring rules is 
the Kullback-Leibler (KL) score, that is defined from $\ell(x,q)=-\log{q(x)}$. 
The divergence associated with the KL score is nothing but the KL divergence. 

We can use strictly proper scoring rules for statistical inference. 
Let $p_\theta(x)$ be a parametrized probability density by the parameter $\theta$, where 
$\theta$ is a member of an open subset $\Theta$ in $\Rbb^d$. 
When the i.i.d. samples $x_1,\ldots,x_n$ are observed from the probability density $p$, 
the statistical model $p_\theta$ is used to estimate the density $p$ based on the samples. 
We assume that $p$ is realized by a probability density in the model. 
Then, the minimization of the empirical loss, 
\begin{align*}
\min_{\theta\in\Theta}\,\frac{1}{n}\sum_{i=1}^{n}\ell(x_i,p_\theta), 
\end{align*}
is expected to provide a good estimate of the probability density $p$. 
This is because the empirical mean converges in probability to the score $S(p,p_\theta)$,
that is minimized at $p_\theta=p$. 

Let us introduce two strictly proper scoring rules; one is the density-power score
$S_{\mathrm{power}}(p,q)$ and the other is the pseudo-spherical score
$S_{\mathrm{sphere}}(p,q)$. 
Both scores have a positive real parameter $\gamma$. 
Given $\gamma>0$, the density-power score is defined as 
\begin{align*}
 S_{\mathrm{power}}(p,q)
 &=
 \gamma
 \<q^{1+\gamma}\>-(1+\gamma)\<pq^\gamma\>, 
\end{align*}
and the associated loss function is given as 
\begin{align*}
 \ell(x,q)=
 \gamma\<q^{1+\gamma}\>-(1+\gamma)q(x)^\gamma. 
\end{align*}
See~\cite{a.98:_robus_effic_estim_minim_densit_power_diver,basu10:_statis_infer} 
for details of the density-power score and its applications. 
On the other hand, 
the pseudo-spherical score~\cite{good71:_commen_measur_infor_uncer_r} is defined as 
\begin{align*}
 S_{\mathrm{sphere}}(p,q)
 &=
 -\frac{\<pq^\gamma\>}{\<q^{1+\gamma}\>^{\gamma/(1+\gamma)}}, 
\end{align*}
that is derived from the loss function, 
\begin{align*}
 \ell(x,q)=-\frac{q(x)^\gamma}{\<q^{1+\gamma}\>^{\gamma/(1+\gamma)}}. 
\end{align*}
The H\"{o}lder's inequality assures that 
the pseudo-spherical score is the strictly proper scoring rule 
The monotone transformation $-\frac{1}{\gamma}\log(-S_{\mathrm{sphere}}(p,q))$ 
is called gamma cross entropy. 
The statistical property of the estimator based on gamma cross entropy was investigated 
in~\cite{fujisawa08:_robus}. 
As the parameter $\gamma$ tends to zero, the estimator derived from the density-power
score or pseudo-spherical score gets close to the MLE; 
see \cite{a.98:_robus_effic_estim_minim_densit_power_diver,fujisawa08:_robus} 
for details. 

For some scoring rules, their domain can be extended to a set of non-negative functions. 
Indeed, one can confirm that 
for non-negative and non-zero functions $f$ and $g$, 
the inequalities 
$S_{\mathrm{power}}(f,g)\geq{}S_{\mathrm{power}}(f,f)$ and 
$S_{\mathrm{sphere}}(f,g)\geq{}S_{\mathrm{sphere}}(f,f)$ hold. 
For the density-power score, 
the equality $S_{\mathrm{power}}(f,g)=S_{\mathrm{power}}(f,f)$ 
for non-negative functions leads to $f=g$. 
For the pseudo-spherical score, however, 
the equality 
$S_{\mathrm{sphere}}(f,g)=S_{\mathrm{sphere}}(f,f)$ 
holds if $f$ and $g$ are linearly dependent. 
Note that the linearly dependent probability densities should be identical. 
Hence, the pseudo-spherical score is strictly proper on the set of probability densities, 
while it is not strictly proper on the set of non-negative functions. 
For the reader's convenience, we give a self-contained short proof
of the above facts in Appendix~\ref{appendix:preliminaries}. 

\subsection{Robustness of Estimators based on Scoring Rules}
\label{subsec:Estimation_under_HeavyContainations}
Let us introduce the robustness property of estimators based on the above scoring rules. 
Suppose that our target is to estimate the probability density $p_0(x)$ 
from the observed samples. 
We use the parametric statistical model $p_\theta(x), \theta\in\Theta$ for the estimation
of the target density $p_0(x)$. 
Assume that $p_0(x)=p_{\theta_0}(x)$ holds for $\theta_0\in\Theta$, 
i.e., the target is realized by the model. 
Let $w(x)$ be a probability density of contamination. 
Suppose that the observations $x_1,\ldots,x_n$ are drawn from 
the contaminated probability density, 
\begin{align}
 \label{eqn:heavy-contamination}
 p(x)=c_0p_0(x)+(1-c_0)w(x), 
\end{align}
in which $1-c_0$ is the contamination ratio that typically lies in the interval $[0,1/2)$. 
Here, we do not assume that $1-c_0$ is infinitesimal, i.e., 
we deal with the situation of heavy contamination. 
Instead, we assume that for a positive constant $\gamma$, 
the quantity
\begin{align*}
 \varepsilon_\theta=\<wp_\theta{}^\gamma\>=\int{}w(x)p_\theta(x)^\gamma{dx}
\end{align*}
is sufficiently small around $\theta=\theta_0$. 
This assumption indicates that the contamination density $w(x)$ mostly lies on the tail of
the target density $p_0(x)$. 

Let us consider the estimation of the target density under heavy contamination. 
The empirical probability density is denoted as $\widetilde{p}(x)$, 
that is expressed by the sum of Dirac's delta function. 
The empirical pseudo-spherical score on the model,
$S_{\mathrm{sphere}}(\widetilde{p},p_\theta)$, converges in probability to
$S_{\mathrm{sphere}}(p,p_\theta)$, and we have 
\begin{align*}
 S_{\mathrm{sphere}}(p,p_\theta)
 &=
 c_0S_{\mathrm{sphere}}(p_0,p_\theta)+
 (1-c_0)\<p_\theta^{1+\gamma}\>^{-\gamma/(1+\gamma)}\varepsilon_\theta. 
\end{align*}
Since $\varepsilon_\theta$ is assumed to be sufficiently small around $\theta=\theta_0$, 
the optimal solution of $\min_\theta{}S_{\mathrm{sphere}}(p,p_\theta)$ will be 
close to that of $\min_\theta{}S_{\mathrm{sphere}}(p_0,p_\theta)$. 
Hence, even under heavy contamination, the pseudo-spherical score produces
approximately consistent estimator of the target density $p_0$. 
The argument above was presented in~\cite{fujisawa08:_robus}. 

For the density-power score, the same argument does not hold. 
Indeed, we have 
\begin{align}
 \label{eqn:contaminated-power-score}
 S_{\mathrm{power}}(p,p_\theta) &=
 S_{\mathrm{power}}(c_0p_0,p_\theta)-(1+\gamma)(1-c_0)\varepsilon_\theta. 
\end{align}
Even if $\varepsilon_\theta$ is exactly zero, the minimizer of 
$\min_\theta{}S_{\mathrm{power}}(c_0p_0,p_\theta)$ will not be 
equal to $\theta=\theta_0$. 
Hence, the density-power score does not produce 
the approximately consistent estimator under heavy contamination.

\subsection{H\"{o}lder score}
\label{subsec:Holder}
As an extension of the scoring rules, Kanamori and Fujisawa proposed 
the H\"{o}lder score that is derived from the invariance 
under data transformations~\cite{kanamoriar:_affin_invar_diver_compos_scores_applicy}. 
The H\"{o}lder score includes the density-power score and pseudo-spherical score 
as special cases. 

Let us define the H\"{o}lder score. 
For a real-valued function $\phi(z)$ defined for $z\geq0$, 
suppose that $\phi(1)=-1$ and $\phi(z)\geq-z^{1+\gamma}$, 
where $\gamma$ is a positive real constant. 
Given $\gamma>0$, the H\"{o}lder score $S_\phi$ based on the function $\phi$ 
is defined as 
\begin{align}
 \label{eqn:Holder-score}
 S_\phi(f,g)=\phi
 \left( \frac{\<fg^\gamma\>}{\<g^{1+\gamma}\>} \right){\<g^{1+\gamma}\>}
\end{align}
for the non-negative functions $f$ and $g$. 
The H\"{o}lder inequality assures that $S_\phi(f,g)\geq{}S_\phi(f,f)$ holds, and 
the equality $S_\phi(f,g)=S_\phi(f,f)$ leads to the linear dependence of $f$ and $g$. 
More involved argument yields that 
for probability densities $p$ and $q$, the equality $S_\phi(p,q)=S_\phi(p,p)$ leads to
$p=q$; see~\cite{kanamoriar:_affin_invar_diver_compos_scores_applicy} for details. 
We give a self-contained short proof of the above facts 
in Appendix~\ref{appendix:preliminaries}. 
Generally, the H\"{o}lder score $S_\phi(p,q)$ for the probability densities $p$ and $q$  
is not expressed as the expectation with respect to $p$. 
However, one can substitute the empirical distribution of training samples into~$p$, 
since $S_\phi(p,q)$ depends on $p$ through the integral $\<pq^\gamma\>$. 
The H\"{o}lder score with $\phi(z)=\gamma-(1+\gamma)z$ is reduced to the density-power
score, and the lower bound $\phi(z)=-z^{1+\gamma}$ yields that
$S_\phi(f,g)=-(-S_{\mathrm{sphere}}(f,g))^{1+\gamma}$. 

The H\"{o}lder score is derived from the invariance property of the data transformation. 
Suppose that the probability density $p(x)$ is transformed to $\bar{p}(z)$, 
when the data $x$ is changed to $z$ by an affine transformation. 
Then, the divergence $S_\phi(\bar{p},\bar{q})-S_\phi(\bar{p},\bar{p})$ is 
converted into ${h}\{S_\phi(p,q)-S_\phi(p,p)\}$, 
where $h$ is a positive constant depending only on the affine transformation of data. 
This implies that the data transformation does not essentially change the distance
structure on the set of probability densities. 
In addition, the affine invariance implies that the estimator defined from the H\"{o}lder
score is equivariant~\cite{berger85:_statis_decis_theor_bayes_analy}. 
In other words, the estimator does not essentially depend on the choice of the system of
units in the measurement. 
This is a desirable property for statistical data analysis.

\section{Robust Estimation using Enlarged Models}
\label{eqn:Scale_Models_Heavy_Contaminations}
Detecting outliers out of training samples is an important task in data analysis. 
To deal with this issue, we introduce estimators of the contamination ratio based on 
scoring rules with enlarged models. 
We present some theoretical properties of the proposed estimators. 

\subsection{Contamination Ratio Estimation using Enlarged Models}
\label{subsec:Scoring_Rules_Scaled_Models}
As shown in the previous section, the estimator based on the pseudo-spherical score 
produces an approximately consistent estimator of the target density even 
under heavy contamination. 
However, the ratio $c_0$ in the contaminated distribution is not estimated. 
Estimating the contamination ratio $1-c_0$ is available to detect outliers 
out of the training samples. 
Using the estimated contamination ratio, 
one can identify the outliers out of the training samples by picking up the estimated 
number of training samples in ascending order of the estimated value of the target 
probability density. 
This is because the outliers are assumed to mostly lie on the tail of the underlying
target density. 

In order to estimate not only the target density but also the contamination ratio, 
we use the \emph{enlarged model} $m_{\xi}(x)$ defined as 
\begin{align*}
 m_{\xi}(x)=cp_\theta(x), \quad \xi=(c,\theta),\ c>0,\ \theta\in\Theta, 
\end{align*}
where $p_\theta(x)$ is a parametrized probability density and $c$ is 
a one-dimensional positive real parameter to estimate the ratio $c_0$. 

Let us consider the estimator based on the density-power score 
with the enlarged model. 
Suppose that the samples are drawn from the contaminated probability density
\eqref{eqn:heavy-contamination}, and that $p_0=p_{\theta_0}$ holds. 
In the same way as \eqref{eqn:contaminated-power-score}, we have
\begin{align}
 \label{eqn:diff-power-contamination}
 S_{\mathrm{power}}(p,cp_\theta)
 =
 S_{\mathrm{power}}(c_0p_0,cp_\theta)-(1+\gamma)(1-c_0)c^\gamma\varepsilon_\theta
\end{align}
for the enlarged model $cp_\theta$. 
If $\varepsilon_\theta$ is sufficiently small around $\theta=\theta_0$, 
the optimal solution of the problem $\min_{c,\theta} S_{\mathrm{power}}(p,cp_\theta)$
will be close to that of the problem 
$\min_{c,\theta}S_{\mathrm{power}}(c_0p_0,cp_\theta)$. 
Remember that the density-power score is strictly proper on the set of non-negative
functions. 
Therefore, the density-power score with the enlarged model enables us to 
estimate both the target density $\theta_0$ and the ratio $c_0$. 
On the other hand, the argument in the above is not valid for the pseudo-spherical score, 
because $S_{\mathrm{sphere}}(p,cp_\theta)=S_{\mathrm{sphere}}(p,p_{\theta})$ 
holds for all $c>0$.

H\"{o}lder scores with some regularity conditions
are also available to estimate the target density and contamination ratio. 
Indeed, when $\varepsilon_\theta$ is sufficiently small around $\theta=\theta_0$, 
the H\"{o}lder score defined from a smooth function $\phi$ satisfies 
\begin{align*}
 S_\phi(p,cp_\theta)
& =
 \phi\left(
 \frac{\<c_0p_0(cp_\theta)^\gamma\>}{\<(cp_\theta)^{1+\gamma}\>}
 +\frac{1-c_0}{c\<p_\theta^{1+\gamma}\>}\varepsilon_\theta
 \right)\<(cp_\theta)^{1+\gamma}\>\\
& =
S_\phi(c_0p_0,cp_\theta)+O(\varepsilon_\theta). 
\end{align*}
Suppose that the H\"{o}lder score is strictly proper on the set of non-negative
functions. Then, it is expected that the minimizer of 
$S_\phi(p,cp_\theta)$ is close to $(c,\theta)=(c_0,\theta_0)$.

\subsection{Theoretical Properties of Estimators}
\label{subsec:Theoretical_Properties_Estimators}

Let us consider the optimization of the empirical H\"{o}lder score, 
 \begin{align}
  \label{eqn:Holder-multiplicativemodel-opt}
  \min_{c,\theta} S_\phi(\widetilde{p},cp_\theta),\quad 
  \st\ 0<{c}\leq1,\ \theta\in\Theta, 
 \end{align}
where $\widetilde{p}$ is the empirical probability density of training samples, 
$x_1,\ldots,x_n$. 
We show the relation between the problem \eqref{eqn:Holder-multiplicativemodel-opt} and 
the minimization of the pseudo-spherical score 
 \begin{align}
  \label{eqn:min-pseudospherical}
  \min_{\theta}S_{\mathrm{sphere}}(\widetilde{p},p_\theta),\quad\st\ \theta\in\Theta. 
 \end{align}
 Let $c(\theta)$ be the function
\begin{align}
 \label{eqn:def_c_opt}
 c(\theta)=\frac{\<\widetilde{p}p_{\theta}^\gamma\>}{\<p_{\theta}^{1+\gamma}\>}, 
\end{align}
where
$\<\widetilde{p}p_{\theta}^\gamma\>=\frac{1}{n}\sum_{i=1}^{n}p_{\theta}(x_i)^\gamma$. 
The function $c(\theta)$ connects \eqref{eqn:Holder-multiplicativemodel-opt} 
and \eqref{eqn:min-pseudospherical}. Indeed, we have 
\begin{align*}
 S_\phi(\widetilde{p},cp_\theta)
 =
 \phi\left(
 \frac{\<\widetilde{p}p_\theta^\gamma\>}{c\<p_\theta^{1+\gamma}\>}
 \right) c^{1+\gamma}\<p_\theta^{1+\gamma}\>
 \geq
 -\frac{\<\widetilde{p}p_\theta^\gamma\>^{1+\gamma}}{\<p_\theta^{1+\gamma}\>^\gamma}
 =
 -(-S_{\mathrm{sphere}}(\widetilde{p},p_\theta))^{1+\gamma}, 
\end{align*}
and the equality holds for $c=c(\theta)$. 
Details are presented in the following lemma and theorem. 
The proof is found in Appendix~\ref{appendix:proof_opt_sol}. 
\begin{lemma}
 \label{lemma:pseudo-spherical-Holder}
 For the function $\phi$ in the H\"{o}lder score, suppose $\phi(1)=-1$ and
 $\phi(z)>-z^{1+\gamma}$ for $z\neq1$.  
 For arbitrary positive real number $u$, let us define $\psi_u(z)$ as
 $\psi_u(z)=z^{1+\gamma}\phi(u/z)$ for $z>0$. 
 Suppose that the function $\psi_u(z)$ is strictly decreasing on the open interval
 $(0,u)$. 
 Then, for any fixed parameter $\theta\in\Theta$, the optimal solution of 
 the problem
 \begin{align}
  \label{eqn:unconstraint-Holder-opt}
  \min_{c} S_\phi(\widetilde{p},cp_\theta),\quad  \st\ 0<{c}\leq1
 \end{align}
 is uniquely given as $c=\min\{1,c(\theta)\}$, in which the function $c(\theta)$ is
 defined by \eqref{eqn:def_c_opt}. 
 \end{lemma}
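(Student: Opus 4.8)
The plan is to reduce the minimization over $c$ for a fixed $\theta$ to a one-variable analysis of the function $\psi_u$ introduced in the statement. First I would substitute $g=cp_\theta$ into the definition \eqref{eqn:Holder-score} of the H\"older score and use the homogeneity of the two integrals, $\<\widetilde p\,(cp_\theta)^\gamma\>=c^\gamma\<\widetilde p\,p_\theta^\gamma\>$ and $\<(cp_\theta)^{1+\gamma}\>=c^{1+\gamma}\<p_\theta^{1+\gamma}\>$. Writing $a=\<\widetilde p\,p_\theta^\gamma\>=\frac1n\sum_{i=1}^n p_\theta(x_i)^\gamma>0$, $b=\<p_\theta^{1+\gamma}\>>0$, and $u=c(\theta)=a/b>0$ for the quantity in \eqref{eqn:def_c_opt}, one obtains
\begin{align*}
 S_\phi(\widetilde p,cp_\theta)=\phi\!\left(\frac{a}{cb}\right)c^{1+\gamma}b
 =b\,c^{1+\gamma}\phi\!\left(\frac{u}{c}\right)=b\,\psi_u(c).
\end{align*}
Since $b>0$ is a constant when $\theta$ is fixed, problem \eqref{eqn:unconstraint-Holder-opt} is equivalent to minimizing $\psi_u(c)$ over $0<c\le1$.

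The second step is to locate the \emph{unconstrained} minimizer of $\psi_u$ on $(0,\infty)$. Applying the hypothesis $\phi(z)\ge-z^{1+\gamma}$ — which holds for all $z\ge0$, with equality only at $z=1$, because $\phi(1)=-1$ and $\phi(z)>-z^{1+\gamma}$ for $z\neq1$ — at the point $z=u/c$ and multiplying by $c^{1+\gamma}>0$ gives
\begin{align*}
 \psi_u(c)=c^{1+\gamma}\phi\!\left(\frac{u}{c}\right)\ge-c^{1+\gamma}\!\left(\frac{u}{c}\right)^{1+\gamma}=-u^{1+\gamma},
\end{align*}
with equality precisely when $u/c=1$, i.e.\ $c=u$. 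Hence $\psi_u$ has a strict global minimum on $(0,\infty)$ at $c=u=c(\theta)$, of value $-u^{1+\gamma}$; equivalently $S_\phi(\widetilde p,cp_\theta)\ge-b\,c(\theta)^{1+\gamma}=-(-S_{\mathrm{sphere}}(\widetilde p,p_\theta))^{1+\gamma}$ with equality at $c=c(\theta)$, which is exactly the inequality displayed just before the lemma.

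Finally I would split according to whether the unconstrained optimum is feasible. If $c(\theta)\le1$, then $c=c(\theta)$ lies in $(0,1]$, so by the strict-minimum property of the previous step it is the unique solution of \eqref{eqn:unconstraint-Holder-opt}, and $c(\theta)=\min\{1,c(\theta)\}$. If $c(\theta)>1$, then $(0,1]\subset(0,c(\theta))$, and on this interval $\psi_u$ is strictly decreasing by hypothesis; therefore its minimum over $(0,1]$ is attained uniquely at the right endpoint $c=1=\min\{1,c(\theta)\}$. Combining the two cases yields the claim. I do not anticipate a genuine obstacle; the only points requiring care are the non-degeneracy $a>0$ — needed so that $c(\theta)$ is a well-defined positive number, which holds whenever $p_\theta$ is positive at some sample point, as for the location-scale families considered — and observing that the monotonicity assumption on $\psi_u$ is invoked only once, namely to rule out the infimum over $(0,1]$ being approached as $c\to0^+$ in the case $c(\theta)>1$; it plays no role when $c(\theta)\le1$.
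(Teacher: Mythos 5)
Your proof is correct and follows essentially the same route as the paper's: reduce $S_\phi(\widetilde p,cp_\theta)$ to $\<p_\theta^{1+\gamma}\>\psi_{c(\theta)}(c)$, use $\phi(z)\ge -z^{1+\gamma}$ with equality only at $z=1$ to locate the unique unconstrained minimizer at $c=c(\theta)$, and invoke the strict monotonicity of $\psi_u$ on $(0,u)$ to handle the case $c(\theta)>1$. Your closing remark correctly pinpoints that the monotonicity hypothesis is only needed in that second case, which the paper's proof uses in exactly the same way.
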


\begin{remark}
 The function $\phi(z)=\gamma-(1+\gamma)z$ that produces the density-power score 
 satisfies the conditions in Lemma~\ref{lemma:pseudo-spherical-Holder}. 
 Let us confirm the condition concerning the function $\psi_u(z)$. 
 For the density-power score, we have $\psi_u(z)=z^{1+\gamma}(\gamma-(1+\gamma)u/z)$, and
 the derivative is $\psi_u'(z)=\gamma(1+\gamma)z^{\gamma-1}(z-u)$. Hence, 
 $\psi_u'(z)<0$ holds for $z\in(0,u)$. 
\end{remark}

\begin{theorem}
 \label{theorem:Holder-min-alg}
 Let $(\widehat{c},\widehat{\theta})$ be an optimal solution of 
 \eqref{eqn:Holder-multiplicativemodel-opt}. 
 In addition to the assumptions in Lemma~\ref{lemma:pseudo-spherical-Holder}, 
 we assume that $c(\theta)$ in \eqref{eqn:def_c_opt} is continuous in the  vicinity of
 $\widehat{\theta}$. 
 If $0<\widehat{c}<1$ holds, the parameter $\widehat{\theta}$ is a local optimal
 solution of \eqref{eqn:min-pseudospherical}.
 Otherwise, 
 the parameter $\widehat{\theta}$ is an optimal solution of the problem
 \begin{align}
  \label{eqn:Holder-opt-c1}
  \min_\theta{}S_\phi(\widetilde{p},p_\theta),\quad\st\,\theta\in\Theta. 
 \end{align}
\end{theorem}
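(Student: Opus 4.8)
The plan is to eliminate the scale parameter $c$ first, reducing \eqref{eqn:Holder-multiplicativemodel-opt} to a minimization over $\theta$ alone, and then to read off the two alternatives from the closed form of the optimal $c$ given by Lemma~\ref{lemma:pseudo-spherical-Holder}. Concretely, I would set $c^\ast(\theta)=\min\{1,c(\theta)\}$ and $g(\theta)=S_\phi(\widetilde{p},c^\ast(\theta)p_\theta)$; by Lemma~\ref{lemma:pseudo-spherical-Holder}, $c^\ast(\theta)$ is the unique minimizer of \eqref{eqn:unconstraint-Holder-opt} and $g(\theta)$ its optimal value, so \eqref{eqn:Holder-multiplicativemodel-opt} is equivalent to $\min_\theta g(\theta)$, and any optimal pair satisfies $\widehat{\theta}\in\argmin_\theta g(\theta)$ with $\widehat{c}=c^\ast(\widehat{\theta})=\min\{1,c(\widehat{\theta})\}$. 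Since $\langle\widetilde{p}p_\theta^\gamma\rangle=\frac{1}{n}\sum_i p_\theta(x_i)^\gamma>0$, the ratio $c(\theta)$ is always strictly positive, so $\widehat{c}\in(0,1]$ and the complement of the case $0<\widehat{c}<1$ is exactly $\widehat{c}=1$.

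In the case $0<\widehat{c}<1$, I would note that $\widehat{c}=\min\{1,c(\widehat{\theta})\}$ forces $0<c(\widehat{\theta})<1$, and then invoke the continuity of $c(\theta)$ near $\widehat{\theta}$ to obtain an open neighborhood $U\subseteq\Theta$ of $\widehat{\theta}$ on which $0<c(\theta)<1$, hence $c^\ast(\theta)=c(\theta)$. On $U$ the equality case of the H\"{o}lder inequality displayed just before Lemma~\ref{lemma:pseudo-spherical-Holder} then gives $g(\theta)=-\bigl(-S_{\mathrm{sphere}}(\widetilde{p},p_\theta)\bigr)^{1+\gamma}$. Because $-S_{\mathrm{sphere}}(\widetilde{p},p_\theta)$ is strictly positive and $t\mapsto -t^{1+\gamma}$ is strictly decreasing on $(0,\infty)$, minimizing $g$ over $U$ is the same as minimizing $\theta\mapsto S_{\mathrm{sphere}}(\widetilde{p},p_\theta)$ over $U$; since $\widehat{\theta}$ globally minimizes $g$, it follows that $\widehat{\theta}$ is a local optimal solution of \eqref{eqn:min-pseudospherical}.

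In the remaining case $\widehat{c}=1$, I would use that $g(\widehat{\theta})=S_\phi(\widetilde{p},p_{\widehat{\theta}})$ while, for every $\theta\in\Theta$, feasibility of $c=1$ in \eqref{eqn:unconstraint-Holder-opt} gives $g(\theta)\le S_\phi(\widetilde{p},p_\theta)$; combining these with $g(\widehat{\theta})\le g(\theta)$ yields $S_\phi(\widetilde{p},p_{\widehat{\theta}})\le S_\phi(\widetilde{p},p_\theta)$ for all $\theta\in\Theta$, i.e.\ $\widehat{\theta}$ is an optimal solution of \eqref{eqn:Holder-opt-c1}.

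The main obstacle is the neighborhood argument in the first case: the identity $g(\theta)=-(-S_{\mathrm{sphere}}(\widetilde{p},p_\theta))^{1+\gamma}$ must be seen to hold throughout a neighborhood of $\widehat{\theta}$ rather than merely at $\widehat{\theta}$, and this is exactly where both the strictness $0<\widehat{c}<1$ (to push $c(\widehat{\theta})$ into the open interval $(0,1)$) and the continuity hypothesis on $c(\theta)$ are needed. Everything else is routine bookkeeping with the monotone transformation $t\mapsto -t^{1+\gamma}$ and the elementary positivity $\langle\widetilde{p}p_\theta^\gamma\rangle>0$.
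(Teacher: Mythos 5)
Your proposal is correct and follows essentially the same route as the paper's proof: reduce to the profile objective $g(\theta)=S_\phi(\widetilde{p},\min\{1,c(\theta)\}p_\theta)$ via Lemma~\ref{lemma:pseudo-spherical-Holder}, use continuity of $c(\theta)$ to get a neighborhood where $0<c(\theta)<1$ and hence where $g$ coincides with $-(-S_{\mathrm{sphere}}(\widetilde{p},p_\theta))^{1+\gamma}$, and handle $\widehat{c}=1$ by comparing $g(\theta)\le S_\phi(\widetilde{p},p_\theta)$ with $g(\widehat{\theta})=S_\phi(\widetilde{p},p_{\widehat{\theta}})$. You in fact spell out two steps the paper leaves implicit (the strict monotonicity of $t\mapsto -t^{1+\gamma}$ on $(0,\infty)$ and the $\widehat{c}=1$ case, which the paper dismisses as clear), but the underlying argument is the same.
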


A simple optimization procedure of the problem \eqref{eqn:Holder-multiplicativemodel-opt}
is constructed based on the above theorem. 
Suppose that the assumptions in Theorem~\ref{theorem:Holder-min-alg} holds. 
Moreover, we assume that the problem \eqref{eqn:min-pseudospherical} has the unique local
optimal solution, $\widetilde{\theta}\in\Theta$. 
If $0<c(\widetilde{\theta})\leq1$, the parameter
$(c(\widetilde{\theta}),\widetilde{\theta})$ is an optimal solution of
\eqref{eqn:Holder-multiplicativemodel-opt}. 
Otherwise, solve the problem \eqref{eqn:Holder-opt-c1}, and 
let $\bar{\theta}$ be the optimal solution. 
Then, the point $(c,\theta)=(1,\bar{\theta})$ is an optimal solution of
\eqref{eqn:Holder-multiplicativemodel-opt}. 
Iterative algorithms are available to solve
\eqref{eqn:min-pseudospherical} and \eqref{eqn:Holder-opt-c1}; 
see~\cite{fujisawa08:_robus,a.98:_robus_effic_estim_minim_densit_power_diver} for details. 
When some assumptions in the above argument are violated, 
we use the standard non-linear constrained optimization methods 
such as active set methods. 
Since the constrained inequality $0<c\leq1$ is easy to deal with, 
the non-linear optimization methods will also efficiently work to solve 
\eqref{eqn:Holder-multiplicativemodel-opt}. 

We evaluate the bias of the estimator. 
Let us define $\xi_1=(c_1,\theta_1)$ as an optimal solution of 
\begin{align}
 \label{eqn:min-Holder-score-contam-dist}
 \min_{c,\,\theta}S_{\phi}(p,cp_\theta),\quad\st{}c>0,\ \theta\in\Theta, 
\end{align}
where $p(x)$ is defined by \eqref{eqn:heavy-contamination}, 
and define $\varepsilon_1=\<wp_{\theta_1}^\gamma\>$. 
Similarly to Lemma~\ref{lemma:pseudo-spherical-Holder} and 
Theorem~\ref{theorem:Holder-min-alg}, 
the optimal parameter $\xi_1$ does not depend on the
function $\phi$ under a mild assumption. 
\begin{theorem}
\label{theorem:small-bias}
 Suppose 
 that 
 $\xi_1=(c_1,\theta_1)$ is the unique optimal solution of
 \eqref{eqn:min-Holder-score-contam-dist}. 
 Define 
 $f_0(\xi)=S_{\mathrm{power}}(c_0p_0,cp_\theta)$ 
 and 
 $f_1(\xi)=S_{\mathrm{power}}(p,cp_\theta)$ as the function of
 $\xi=(c,\theta)$. 
 For $\xi_1=(c_1,\theta_1)$ and $\varepsilon_1=\<wp_{\theta_1}^\gamma\>$, 
 let $\mathcal{N}$ be a convex set satisfying
 \begin{align*}
  \{\xi\in(0,1]\times\Theta\,|\,f_1(\xi)\leq{}f_1(\xi_1)+(1+\gamma)\varepsilon_1\}
  \subset
  \mathcal{N}. 
 \end{align*}
 Suppose that 
 $f_0(\xi)$ is second order differentiable on $\mathcal{N}$. 
 Let $H_\xi$ be the Hessian matrix of $f_0(\xi)$, and suppose that there exists a
 positive real number $\delta$ 
 such that all eigenvalues of $H_\xi,\,\xi\in\mathcal{N}$ are greater than $\delta$. 
 Then, $\|\xi_1-\xi_0\|=O(\varepsilon_1^{1/2})$ holds. 
\end{theorem}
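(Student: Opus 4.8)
The plan is to combine the exact decomposition of the density-power score under the contamination model with the quadratic growth of $f_0$ furnished by the Hessian lower bound on $\mathcal{N}$. Recall from \eqref{eqn:diff-power-contamination}, applied to the enlarged model, that for every $\xi=(c,\theta)$
\begin{align*}
 f_1(\xi)=f_0(\xi)-(1+\gamma)(1-c_0)\,c^\gamma\varepsilon_\theta ,\qquad \varepsilon_\theta=\<wp_\theta^\gamma\> ,
\end{align*}
so in particular $\varepsilon_1=\varepsilon_{\theta_1}$. Since the density-power score is strictly proper on the set of non-negative functions, $f_0(\xi)=S_{\mathrm{power}}(c_0p_0,cp_\theta)$ is minimized, uniquely and at an interior point, at $\xi_0=(c_0,\theta_0)$ (where $cp_\theta=c_0p_0$); as $f_0$ is $C^2$ near $\xi_0$, the first-order condition gives $\nabla f_0(\xi_0)=0$. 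Moreover, by the remark preceding the theorem---the analogue of Lemma~\ref{lemma:pseudo-spherical-Holder} and Theorem~\ref{theorem:Holder-min-alg} for the contaminated density $p$---the optimal parameter of \eqref{eqn:min-Holder-score-contam-dist} does not depend on the choice of $\phi$, so $\xi_1$ coincides with the minimizer of $f_1(\xi)=S_{\mathrm{power}}(p,cp_\theta)$; since $\xi_0$ is feasible, $f_1(\xi_1)\le f_1(\xi_0)$.

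First I would convert $f_1(\xi_1)\le f_1(\xi_0)$ into an upper bound on $f_0(\xi_1)-f_0(\xi_0)$. Substituting the decomposition, discarding the nonnegative term $(1+\gamma)(1-c_0)c_0^\gamma\varepsilon_{\theta_0}$, and bounding $(1-c_0)c_1^\gamma\le1$,
\begin{align*}
 f_0(\xi_1)-f_0(\xi_0)\le(1+\gamma)(1-c_0)\,c_1^\gamma\varepsilon_1\le(1+\gamma)\varepsilon_1 .
\end{align*}
Performing the same manipulation the other way round, now using $f_0(\xi_0)\le f_0(\xi_1)$, yields $f_1(\xi_0)-f_1(\xi_1)\le(1+\gamma)\varepsilon_1$; together with the trivial $f_1(\xi_1)\le f_1(\xi_1)+(1+\gamma)\varepsilon_1$ this places both $\xi_0$ and $\xi_1$ in the sublevel set $\{\xi:\ f_1(\xi)\le f_1(\xi_1)+(1+\gamma)\varepsilon_1\}\subset\mathcal{N}$. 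Because $\mathcal{N}$ is convex, the whole segment $[\xi_0,\xi_1]$ lies in $\mathcal{N}$, where $f_0$ is $C^2$ with all eigenvalues of its Hessian above $\delta$, i.e.\ $\delta$-strongly convex.

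Then I would apply the standard quadratic lower bound for a $\delta$-strongly convex function along $[\xi_0,\xi_1]$, using $\nabla f_0(\xi_0)=0$:
\begin{align*}
 f_0(\xi_1)\ge f_0(\xi_0)+\nabla f_0(\xi_0)^\top(\xi_1-\xi_0)+\frac{\delta}{2}\|\xi_1-\xi_0\|^2=f_0(\xi_0)+\frac{\delta}{2}\|\xi_1-\xi_0\|^2 .
\end{align*}
Chaining this with the upper bound from the preceding paragraph gives $\frac{\delta}{2}\|\xi_1-\xi_0\|^2\le(1+\gamma)\varepsilon_1$, hence $\|\xi_1-\xi_0\|\le\sqrt{2(1+\gamma)\varepsilon_1/\delta}=O(\varepsilon_1^{1/2})$, which is the claim.

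The quadratic-growth step is routine; the points that need care are the bookkeeping ones. First, one must verify that $\xi_0$---and not merely $\xi_1$, for which membership is trivial---belongs to the sublevel set, which is exactly why the reverse estimate on $f_1(\xi_0)-f_1(\xi_1)$ is needed, so that $\delta$-strong convexity is genuinely available along $[\xi_0,\xi_1]$; this, along with the bound $(1-c_0)c_1^\gamma\le1$, also uses $0<c_1\le1$, which holds in the heavy-contamination regime ($\varepsilon$ small) the paper works in. Second, the first-order condition $\nabla f_0(\xi_0)=0$ presumes $\xi_0$ is an interior minimizer; the degenerate case $c_0=1$, in which $p=p_0$, $f_1\equiv f_0$ and $\xi_1=\xi_0$, should simply be excluded at the outset, since the conclusion is then immediate.
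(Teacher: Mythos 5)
Your proposal is correct and follows essentially the same route as the paper's proof: the sandwich $f_1(\xi)\leq f_0(\xi)\leq f_1(\xi)+(1+\gamma)\varepsilon_\theta$ from \eqref{eqn:diff-power-contamination} (using $c,c_0\in(0,1]$), the observation that $\xi_1$ also minimizes $f_1$, the resulting bounds placing $\xi_0,\xi_1$ in $\mathcal{N}$ with $f_0(\xi_1)-f_0(\xi_0)\leq(1+\gamma)\varepsilon_1$, and finally the second-order Taylor expansion with the eigenvalue bound $\delta$ to get $\frac{\delta}{2}\|\xi_1-\xi_0\|^2\leq(1+\gamma)\varepsilon_1$. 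Your explicit remarks on $\nabla f_0(\xi_0)=0$, the role of $c_1\leq 1$, and the degenerate case $c_0=1$ only make explicit what the paper leaves implicit.
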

The proof is found in Appendix~\ref{appendix:small-bias}. 

The asymptotic distribution of the estimator based on \eqref{eqn:Holder-multiplicativemodel-opt} 
depends on the parameter $c_0$ of the sample distribution \eqref{eqn:heavy-contamination}. 
For the ratio $c_0$ such that $0<c_0<1$, the standard asymptotic expansion 
is available to derive the asymptotic distribution. 
When $c_0=1$, the asymptotic normality will not hold because of the singularity of the
statistical model. 
The asymptotic distribution is, however, obtained by using the
asymptotic expansion under nonstandard
conditions~\cite{self87:_asymp_proper_maxim_likel_estim}. 
The following theorem presents the expression of the asymptotic distribution. 
The matrices $\Sigma_\xi$ and $\Lambda_\theta$, and a small quantity 
$\bar{\varepsilon}$ that appear in the theorem are defined in the proof in 
Appendix~\ref{appendix:asymptotics}. 
\begin{theorem}
 \label{theorem:asymptotic_distribution}
 Let $\xi_0=(c_0,\theta_0)\in(0,1]\times\Theta$ be the target parameter in 
 \eqref{eqn:heavy-contamination}, 
 where $p_0(x)=p_{\theta_0}(x)$ is assumed. 
 Suppose that the conclusion of Theorem~\ref{theorem:small-bias}, i.e., 
 $\|\xi_1-\xi_0\|=O(\varepsilon_1^{1/2})$ holds for $\xi_1=(c_1,\theta_1)$
 that is an optimal solution of \eqref{eqn:min-Holder-score-contam-dist}. 
 An optimal solution of \eqref{eqn:Holder-multiplicativemodel-opt} is 
 denoted as $\widehat{\xi}=(\widehat{c},\widehat{\theta})\in\Rbb^{1+d}$. 
 Let $\widetilde{\theta}$ and $\bar{\theta}$ be the $d$-dimensional optimal solutions of 
 \eqref{eqn:min-pseudospherical} and \eqref{eqn:Holder-opt-c1}, respectively. 
 Then, the following asymptotic properties hold: 

 \begin{enumerate}
  \item Suppose $0<c_0<1$. 
	In addition to the assumptions in Theorem~\ref{theorem:Holder-min-alg}, 
	suppose the regularity conditions such that
	the random vector 
	$\sqrt{n}(c(\widetilde{\theta})-c_1, \widetilde{\theta}-\theta_1)$
	converges in distribution to a $(1+d)$-dimensional multivariate normal distribution with
	the mean zero. 
	Then, the asymptotic distribution of the estimator
	$\widehat{\xi}=(\widehat{c},\widehat{\theta})$ is given as 
	the $d+1$ dimensional normal distribution, i.e., 
	\begin{align*}
	 \sqrt{n}(\widehat{\xi}-\xi_1)\ \cd\ 
	 N_{1+d}(\0,\Sigma_{\xi_0}+O(\bar{\varepsilon}^{1/2})), 
	\end{align*}
	and $\xi_1=\xi_0+O(\bar{\varepsilon}^{1/2})$. 

  \item Suppose $\xi_0=(1,\theta_0)$. 
	In addition to the assumptions in Theorem~\ref{theorem:Holder-min-alg}, 
	suppose the regularity conditions such that
	$\sqrt{n}(c(\widetilde{\theta})-1,\widetilde{\theta}-\theta_0)$
	and 
	$\sqrt{n}(c(\widetilde{\theta})-1,\bar{\theta}-\theta_0)$
	converge in distribution to $(1+d)$-dimensional multivariate normal distributions 
	with the mean zero. 
	Then, the asymptotic distribution of the estimator is expressed as 
	\begin{align*}
	 \sqrt{n}(\widehat{\xi}-\xi_0)
	 \cd{Z}, 
	\end{align*}
	in which $Z=(Z_0,Z_1,\ldots,Z_d)$ is the random variable 
	having the probability density 
	\begin{align*}
	 \phi_{d+1}(z_0,\z_1;\Sigma_{\xi_0})\1[z_0\leq0]
	 +\frac{1}{2}\delta(z_0)
	 \phi_{d}(\z_1;\Lambda_{\theta_0}), 
	\end{align*}
	where $z_0\in\Rbb$ corresponds to $Z_0$ 
	and $\z_1=(z_1,\ldots,z_d)\in\Rbb^d$ corresponds to $(Z_1,\ldots,Z_d)$. 
	Here, $\phi_d(\z;\Sigma)$ denotes the probability density of 
	the distribution $N_d(\0,\Sigma)$, and 
	$\delta(z)$ is the Dirac's delta function. 
	The indicator function $\1[A]$ takes $1$ if $A$ is true, and $0$ otherwise. 
 \end{enumerate} 
\end{theorem}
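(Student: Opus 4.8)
The plan is to reduce both parts to the asymptotic behaviour of the auxiliary estimators $\widetilde\theta$ (the pseudo-spherical minimizer of~\eqref{eqn:min-pseudospherical}) and $\bar\theta$ (the constrained H\"older minimizer of~\eqref{eqn:Holder-opt-c1}), exploiting Theorem~\ref{theorem:Holder-min-alg}, which tells us that the estimator $\widehat\xi=(\widehat c,\widehat\theta)$ coincides with one of two explicit candidates depending on whether $c(\widetilde\theta)$ falls inside $(0,1)$ or not. First I would record that, by Theorem~\ref{theorem:small-bias}, the population minimizer satisfies $\xi_1=\xi_0+O(\bar\varepsilon^{1/2})$, where $\bar\varepsilon$ is a suitable uniform version of $\varepsilon_{\theta_1}$; this lets us replace $\xi_1$ by $\xi_0$ throughout at the cost of an $O(\bar\varepsilon^{1/2})$ perturbation of the limiting covariance. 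Define $\Sigma_{\xi_0}$ as the asymptotic covariance of $\sqrt n(c(\widetilde\theta)-c_1,\widetilde\theta-\theta_1)$ assumed to exist by the regularity hypothesis, and $\Lambda_{\theta_0}$ as the asymptotic covariance of $\sqrt n(\bar\theta-\theta_0)$ (together with the first coordinate coming from $c(\widetilde\theta)$).

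For part~1, with $0<c_0<1$, I would argue that $c(\widetilde\theta)\convinprob c_1\in(0,1)$, so with probability tending to one the active-set indicator puts us in the interior case $0<\widehat c<1$; by Theorem~\ref{theorem:Holder-min-alg} on that event $\widehat\theta$ is the local minimizer of the pseudo-spherical score, i.e.\ $\widehat\theta=\widetilde\theta$ and $\widehat c=c(\widetilde\theta)$. Hence $\sqrt n(\widehat\xi-\xi_1)=\sqrt n(c(\widetilde\theta)-c_1,\widetilde\theta-\theta_1)+o_p(1)$, and the assumed CLT gives the stated normal limit with covariance $\Sigma_{\xi_0}+O(\bar\varepsilon^{1/2})$. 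This is the routine case.

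For part~2, the point $c_0=1$ sits on the boundary of the constraint $0<c\le1$, so standard asymptotics fail; here I would follow the nonstandard-conditions approach of Self and Liang~\cite{self87:_asymp_proper_maxim_likel_estim}. The idea is to localize: write $c=1+z_0/\sqrt n$, $\theta=\theta_0+\z_1/\sqrt n$, and show the rescaled empirical H\"older objective converges (after subtracting its value at $\xi_0$) to a random quadratic form in $(z_0,\z_1)$ restricted to the half-space $z_0\le0$, whose unconstrained minimizer over $\Rbb^{1+d}$ is the Gaussian vector with covariance $\Sigma_{\xi_0}$. When that unconstrained minimizer has nonpositive first coordinate, the constrained argmin equals it — this is the $\phi_{d+1}(z_0,\z_1;\Sigma_{\xi_0})\1[z_0\le0]$ piece. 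When it has positive first coordinate (probability $1/2$ asymptotically, by symmetry of the centred Gaussian), the constraint binds at $c=1$ and, by the second clause of Theorem~\ref{theorem:Holder-min-alg}, $\widehat\theta=\bar\theta$, the minimizer of~\eqref{eqn:Holder-opt-c1}; its conditional law is $N_d(\0,\Lambda_{\theta_0})$ and $z_0=0$ exactly, contributing the $\tfrac12\delta(z_0)\phi_d(\z_1;\Lambda_{\theta_0})$ atom. Summing the two cases and invoking the argmax (continuous-mapping) theorem for the minimizer of the limiting random field yields the claimed mixture density for $Z$.

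The main obstacle is the part~2 boundary analysis: one must set up a locally asymptotically quadratic (LAQ-type) expansion of $S_\phi(\widetilde p,cp_\theta)$ that is valid uniformly on $1/\sqrt n$-neighbourhoods, justify that $\phi$'s smoothness plus $\phi(1)=-1$ make the first-order term the score statistic and the second-order term a nondegenerate (deterministic, by a law of large numbers) Hessian, and then carefully match the two regimes of Theorem~\ref{theorem:Holder-min-alg} at the kink $c(\widetilde\theta)=1$ — in particular checking that on the event $c(\widetilde\theta)>1$ the constrained problem genuinely reduces to~\eqref{eqn:Holder-opt-c1} and that the $z_0$-marginal degenerates to a point mass with exactly half the total probability. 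Controlling the $O(\bar\varepsilon^{1/2})$ contamination bias uniformly over the shrinking neighbourhood, so that it perturbs only the covariance and not the qualitative form of the limit, is the other delicate point.
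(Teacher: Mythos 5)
Your overall strategy is sound and part~1 essentially coincides with the paper's argument: on the event $c(\widetilde{\theta})<1$, which has probability tending to one since $c_1<1$, Theorem~\ref{theorem:Holder-min-alg} gives $\widehat{\xi}=(c(\widetilde{\theta}),\widetilde{\theta})$ and the assumed CLT finishes the job. One soft spot: you \emph{define} $\Sigma_{\xi_0}$ as the asymptotic covariance of $\sqrt{n}(c(\widetilde{\theta})-c_1,\widetilde{\theta}-\theta_1)$ granted by the regularity hypothesis, which makes the claim ``covariance $=\Sigma_{\xi_0}+O(\bar{\varepsilon}^{1/2})$'' vacuous. In the paper $\Sigma_{\xi_0}=J_0^{-1}K_0J_0^{-1}$ is the sandwich covariance computed at the \emph{clean} parameter $\xi_0$ under the \emph{clean} density $c_0p_{\theta_0}$, and the substance of the statement is that the true covariance $J_p^{-1}K_pJ_p^{-1}$ (at $\xi_1$, under the contaminated $p$) differs from it by $O(\bar{\varepsilon}^{1/2})$; proving this requires perturbing both the evaluation point ($\xi_1\to\xi_0$, which you do note) and the integrating measure ($p\to c_0p_0$), and the latter is exactly where the moment quantities $\<wp_\theta^\gamma|s_{\theta,i}|\>$, $\<wp_\theta^{2\gamma}|s_{\theta,i}s_{\theta,j}|\>$, etc.\ that define $\bar{\varepsilon}$ come in. Your sketch omits this step.

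For part~2 you take a genuinely different route. The paper does not perform a local asymptotic quadratic expansion of the objective; it exploits the exact finite-sample dichotomy of Theorem~\ref{theorem:Holder-min-alg} --- $\widehat{\xi}=(c(\widetilde{\theta}),\widetilde{\theta})$ when $c(\widetilde{\theta})\le1$ and $\widehat{\xi}=(1,\bar{\theta})$ otherwise --- derives explicit influence-function representations of $\sqrt{n}(c(\widetilde{\theta})-1,\widetilde{\theta}-\theta_0)$ and $\sqrt{n}(\bar{\theta}-\theta_0)$ in terms of the statistics $u$ and $\v$ via the estimating equations, and then splits on the event $\{c(\widetilde{\theta})\le1\}$, using the symmetry of the centred Gaussian to get the $1/2$--$1/2$ weights. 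Your Self--Liang localization plus argmax-theorem scheme would reach the same mixture, but it is heavier machinery (you must establish the LAQ expansion uniformly on $1/\sqrt{n}$-neighbourhoods and handle the non-smoothness at the kink $c(\theta)=1$), and it is partly redundant: once you invoke Theorem~\ref{theorem:Holder-min-alg} to identify the boundary-regime estimator as $(1,\bar{\theta})$ --- as you in fact do --- the quadratic-form projection picture is no longer needed, and the argument collapses to the paper's. If you did insist on the pure projection argument, you would additionally have to verify that the constrained argmin of the limiting field on $\{z_0\le0\}$, conditioned on the constraint binding, agrees in law with the limit of $\sqrt{n}(\bar{\theta}-\theta_0)$ (note the latter depends on $\phi''(1)$), which is an extra reconciliation step your sketch does not address.
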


\begin{remark}
 Some calculation yields that for $\xi=(c,\theta)$, the dependency of $\Sigma_{\xi}$ on 
 $c$ and $\theta$ is given as 
 \begin{align*}
 \Sigma_\xi
 =
 \begin{pmatrix}
  c a_{\theta}-c^2 & \b_{\theta}^T\\ \b_{\theta} & \frac{1}{c}D_{\theta}
 \end{pmatrix},
 \end{align*}
 where $a_{\theta}\in\Rbb,\ \b_{\theta}\in\Rbb^d,\ D_{\theta}\in\Rbb^{d\times{d}}$ are
 quantities that depend only on the parameter $\theta$. 
 When $\gamma$ tends to zero, the vector $\b_\theta$ goes to the zero vector. 
 We omit the concrete expression of the quantities above, 
 since they are somewhat complex. 
 The matrix $\frac{1}{c}D_{\theta}$ is the asymptotic variance of the estimator 
 $\widetilde{\theta}$ based on the pseudo-spherical score. 
 This is proportional to the reciprocal of $c$ that indicates the ratio of samples from the
 target distribution. 
 The same result about the matrix $\frac{1}{c}D_{\theta}$ is presented
 in~\cite{fujisawa08:_robus}. 
\end{remark}

\section{Regression Problems}
\label{sec:Regression_Problems}
Let us consider the application of scoring rules to regression problems. 
In Section~\ref{subsec:Homogeneous_Containation}, the regression problems under
homogeneous contamination is studied. 
In Section~\ref{subsec:Heterogeneous_Contamination}, we deal with heterogeneous
contamination. 
The density-power score and pseudo-spherical score are used to 
derive the estimators for regression problems. 
In~\cite{kanamoriar:_affin_invar_diver_compos_scores_applicy}, 
it is proved that H\"{o}lder score that is available for regression problems is expressed
as a mixture of the density-power score and pseudo-spherical score. 
For simplicity, we focus on the estimators based on the density-power score and
pseudo-spherical score. 

\subsection{Homogeneous Contamination}
\label{subsec:Homogeneous_Containation}
Let us consider the regression problems based on 
the training samples, $(x_i,y_i),i=1,\ldots,n$ that are i.i.d. samples 
from the joint probability density $p(y|x)q(x)$. 
Under the heavy contamination for the output variable $y$, the conditional density 
$p(y|x)$ is supposed to be expressed as  
\begin{align}
 \label{eqn:conditional-contamination}
 p(y|x)=c_0 p_0(y|x)+(1-c_0)w(y|x), 
\end{align}
where $p_0(y|x)$ is the target conditional density. 
The contamination ratio $1-c_0$ is a constant number that typically lies in the interval 
$[0,1/2)$, i.e., $1/2<c_0\leq{1}$. 
In the above model, the contamination ratio is independent of $x$, and 
such situation is called the homogeneous contamination in this paper. 
The conditional density $w(y|x)$ describes the conditional density of outliers. 
To estimate the target conditional density, we use the parametric model
$p_\theta(y|x)=p(y|x;\theta)$ or its extension, 
$m_{\xi}(y|x)=cp_\theta(y|x)$ with $\xi=(c,\theta)$. 
We assume that the target density is included in the model $p_\theta(y|x)$, i.e., 
$p_0(y|x)$ is expressed as $p_0(y|x)=p_{\theta_0}(y|x)$ for a parameter
$\theta_0\in\Theta$. 

We use the density-power score to estimate the target conditional density. 
Remember that the pseudo-spherical score with the enlarged model does not work to
estimate the contamination ratio. 
Given two functions $f(y|x)$ and $g(y|x)$ having two arguments $x$ and $y$
and a probability density $q(x)$, 
let us define the conditional density-power score as 
\begin{align*}
 S_{\mathrm{power}}(f,g;q)
 =
 \int{}S_{\mathrm{power}}(f(\cdot|x),g(\cdot|x))q(x)dx, 
\end{align*}
where $S_{\mathrm{power}}(f(\cdot|x),g(\cdot|x))$ is the density-power score 
between $f(y|x)$ and $g(y|x)$ as the function of $y$ for a fixed $x$. 
It is straightforward to confirm that the inequality $S_{\mathrm{power}}(f,g;q)\geq{} S_{\mathrm{power}}(f,f;q)$
holds and that the equality $S_{\mathrm{power}}(f,g;q)=S_{\mathrm{power}}(f,f;q)$
leads to $f(y|x)=g(y|x)$ almost everywhere under the measure defined from $q(x)dxdy$. 
By overloading the notation $\<f\>$ of $f(x,y)$ to
representing $\int{}f(x,y)dxdy$, the conditional density-power score is expressed as 
\begin{align*}
 S_{\mathrm{power}}(f,g;q)
 =
 \gamma\<qg^{1+\gamma}\>-(1+\gamma)\<fqg^\gamma\>
\end{align*}

In regression problems based on the samples from $p(y|x)q(x)$, 
we can employ $S_{\mathrm{power}}(p,p_\theta;q)$ or $S_{\mathrm{power}}(p,cp_\theta;q)$ 
as the loss function for statistical inference. 
Let us define $\widetilde{p}(y|x)\widetilde{q}(x)$ as the empirical probability density 
of the training samples. 
Substituting $\widetilde{p}(y|x)\widetilde{q}(x)$ into $p$ and $q$ in
$S_{\mathrm{power}}(p,cp_\theta;q)$, we obtain the empirical approximation, 
\begin{align*}
 S_{\mathrm{power}}(\widetilde{p},cp_\theta;\widetilde{q}) 
 &=
 -(1+\gamma)\<\widetilde{p}\widetilde{q}(cp_\theta)^\gamma\>
 +\gamma\<\widetilde{q}(cp_\theta)^{1+\gamma}\>\\
 &=
 -\frac{(1+\gamma)c^\gamma}{n}\sum_{i=1}^{n}p_\theta(y_i|x_i)^\gamma{}
 +\frac{\gamma{}c^{1+\gamma}}{n}\sum_{i=1}^{n}\int{}p_\theta(y|x_i)^{1+\gamma}dy. 
\end{align*}
As the sample size tends to infinity, 
the above empirical approximation
converges in probability to 
$S_{\mathrm{power}}(p,cp_\theta;q)$ at each parameter $(c,\theta)$. 
Under the contamination \eqref{eqn:conditional-contamination}, we have 
\begin{align*}
 S_{\mathrm{power}}(p,cp_\theta;q)=
 S_{\mathrm{power}}(c_0p_0,cp_\theta;q)
 -(1+\gamma)(1-c_0)c^\gamma\check{\varepsilon}_\theta, 
\end{align*}
where $\check{\varepsilon}_\theta$ is defined as 
$\check{\varepsilon}_\theta=\<wqp_\theta^\gamma\>$. 
Let $\varepsilon_\theta(x)$ be 
\begin{align}
 \label{eqn:eps-x}
 \varepsilon_\theta(x)=
 \<w(\cdot|x) p_{\theta}(\cdot|x)^{\gamma}\>
 =\int{}w(y|x) p_{\theta}(y|x)^{\gamma}dy, 
\end{align}
then, we have $\check{\varepsilon}_\theta=\int\varepsilon_\theta(x)q(x)dx$. 
In a similar manner to the argument in Section \ref{subsec:Estimation_under_HeavyContainations}, 
since $\varepsilon_\theta(x)$ is expected to be sufficiently small for each $x$, 
so is $\check{\varepsilon}_\theta$ around $\theta=\theta_0$. 
Then, the optimal solution of $\min_{c,\theta}S_{\mathrm{power}}(p,cp_\theta;q)$ will be
close to the optimal solution of $\min_{c,\theta}S_{\mathrm{power}}(c_0p_0,cp_\theta;q)$, 
implying that the minimization of the empirical approximation 
$\min_{c,\theta}S_{\mathrm{power}}(\widetilde{p},cp_\theta;\widetilde{q})$ 
is expected to provide a good estimator of the target parameter $\theta_0$ 
and the ratio $c_0$. 

As shown in Section \ref{subsec:Scoring_Rules_Scaled_Models}, 
the minimization of the conditional density-power score is related to 
the minimization of the pseudo-spherical score. 
In the regression problems, let us define the pseudo-spherical 
score between two conditional probability densities, $p(y|x)$ and $p_\theta(y|x)$ under
the base measure $q(x)$ as  
\begin{align*}
 S_{\mathrm{sphere}}(p,p_\theta;q)=
 -\frac{\<pqp_\theta^\gamma\>}{\<qp_\theta^{1+\gamma}\>^{\gamma/(1+\gamma)}}. 
\end{align*}
Note that the empirical probability density $\widetilde{p}(y|x)\widetilde{q}(x)$ is
directly substituted into $S_{\mathrm{sphere}}(p,p_\theta;q)$. 

Given training samples, the estimator is obtained by solving the problem, 
 \begin{align}
  \label{eqn:conditional-power-div-opt}
 \min_{c,\theta}S_{\mathrm{power}}(\widetilde{p},cp_\theta;\widetilde{q}),\quad
 \st\ 0<c\leq1,\ \theta\in\Theta. 
 \end{align}
Let us define $c_{\mathrm{reg}}(\theta)$ as
\begin{align*}
 c_{\mathrm{reg}}(\theta)=
 \frac{\<\widetilde{p}\widetilde{q}p_\theta^\gamma\>}{\<\widetilde{q}p_\theta^{1+\gamma}\>}
 =
 \frac{\frac{1}{n}\sum_{i=1}^{n}p_\theta(y_i|x_i)^\gamma}
 {\frac{1}{n}\sum_{i=1}^{n}\int{}p_\theta(y|x_i)^{1+\gamma}dy}. 
\end{align*}
Then, for arbitrary fixed parameter $\theta\in\Theta$, we can verify that 
\begin{align*}
 \min_{c>0}S_{\mathrm{power}}(\widetilde{p},cp_\theta;\widetilde{q})
 =
 S_{\mathrm{power}}(\widetilde{p}, c_{\mathrm{reg}}(\theta)p_\theta;\widetilde{q})
 =
 -(-S_{\mathrm{sphere}}(\widetilde{p},p_\theta;\widetilde{q}))^{1+\gamma}. 
\end{align*}
Hence, in the same way as in Theorem~\ref{theorem:Holder-min-alg}, 
we obtain the theoretical property of the estimator based on 
\eqref{eqn:conditional-power-div-opt}. 
\begin{theorem}
\label{theorem:opt-algorithm-relation-regression}
 Let $(\widehat{c},\widehat{\theta})$ be an optimal solution of
 \eqref{eqn:conditional-power-div-opt}. Suppose that $c_{\mathrm{reg}}(\theta)$ is
 continuous around $\theta=\widehat{\theta}$. 
 If $0<\widehat{c}<1$, the parameter $\widehat{\theta}$ is a local optimal solution
 of the problem, 
 \begin{align}
  \label{eqn:prob-min-cond-sphere}
  \min_\theta{}S_{\mathrm{sphere}}(\widetilde{p},p_\theta;\widetilde{q}),\quad
  \st\ \theta\in\Theta. 
 \end{align}
 Otherwise, the parameter $\widehat{\theta}$ is an optimal solution of 
 \begin{align}
  \label{eqn:prob-min-cond-power-c1}
  \min_\theta{}S_{\mathrm{power}}(\widetilde{p},p_\theta;\widetilde{q}),\quad\st\ \theta\in\Theta. 
 \end{align}
\end{theorem}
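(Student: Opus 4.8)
The plan is to carry the proof of Theorem~\ref{theorem:Holder-min-alg} over to the conditional setting, using that for the density-power score the inner minimization over the scale parameter $c$ is an explicit one-dimensional problem. Fix $\theta\in\Theta$ and write $A_\theta=\<\widetilde{q}p_\theta^{1+\gamma}\>$ and $B_\theta=\<\widetilde{p}\widetilde{q}p_\theta^\gamma\>$, both positive under the obvious nondegeneracy of $p_\theta$ at the samples, so that
\[
S_{\mathrm{power}}(\widetilde{p},cp_\theta;\widetilde{q})=\gamma c^{1+\gamma}A_\theta-(1+\gamma)c^\gamma B_\theta .
\]
Its derivative in $c$ is $\gamma(1+\gamma)c^{\gamma-1}(cA_\theta-B_\theta)$, negative for $c<c_{\mathrm{reg}}(\theta)=B_\theta/A_\theta$ and positive for $c>c_{\mathrm{reg}}(\theta)$. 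Hence on $0<c\le1$ the minimum is attained uniquely at $c=\min\{1,c_{\mathrm{reg}}(\theta)\}$ (this is the density-power instance of Lemma~\ref{lemma:pseudo-spherical-Holder}, cf.\ the Remark after it), and the minimal value equals $-(-S_{\mathrm{sphere}}(\widetilde{p},p_\theta;\widetilde{q}))^{1+\gamma}$ when $c_{\mathrm{reg}}(\theta)\le1$ and equals $S_{\mathrm{power}}(\widetilde{p},p_\theta;\widetilde{q})$ when $c_{\mathrm{reg}}(\theta)\ge1$, as recorded just before the theorem. It follows that a pair $(\widehat{c},\widehat{\theta})$ solves \eqref{eqn:conditional-power-div-opt} iff $\widehat{\theta}$ minimizes $g(\theta):=\min_{0<c\le1}S_{\mathrm{power}}(\widetilde{p},cp_\theta;\widetilde{q})$ over $\Theta$ and $\widehat{c}=\min\{1,c_{\mathrm{reg}}(\widehat{\theta})\}$; in particular $\widehat{c}<1\iff c_{\mathrm{reg}}(\widehat{\theta})<1$ and $\widehat{c}=1\iff c_{\mathrm{reg}}(\widehat{\theta})\ge1$.

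In the case $0<\widehat{c}<1$, continuity of $c_{\mathrm{reg}}$ near $\widehat{\theta}$ gives a neighborhood $U\subset\Theta$ of $\widehat{\theta}$ on which $c_{\mathrm{reg}}(\theta)<1$, so that $g(\theta)=-(-S_{\mathrm{sphere}}(\widetilde{p},p_\theta;\widetilde{q}))^{1+\gamma}$ there. Since $\widehat{\theta}$ minimizes $g$ globally, it minimizes it on $U$, hence is a local minimizer of $\theta\mapsto-(-S_{\mathrm{sphere}}(\widetilde{p},p_\theta;\widetilde{q}))^{1+\gamma}$; as $S_{\mathrm{sphere}}(\widetilde{p},p_\theta;\widetilde{q})<0$ and $t\mapsto-(-t)^{1+\gamma}$ is strictly increasing on $(-\infty,0)$, this is the same as being a local minimizer of $S_{\mathrm{sphere}}(\widetilde{p},p_\theta;\widetilde{q})$, i.e.\ a local optimal solution of \eqref{eqn:prob-min-cond-sphere}.

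In the case $\widehat{c}=1$, no continuity assumption is needed: $c=1$ is feasible in the inner problem, so $g(\theta)\le S_{\mathrm{power}}(\widetilde{p},p_\theta;\widetilde{q})$ for every $\theta$, while $c_{\mathrm{reg}}(\widehat{\theta})\ge1$ forces $g(\widehat{\theta})=S_{\mathrm{power}}(\widetilde{p},p_{\widehat{\theta}};\widetilde{q})$. Combining these with global optimality of $\widehat{\theta}$ for $g$ yields $S_{\mathrm{power}}(\widetilde{p},p_{\widehat{\theta}};\widetilde{q})=g(\widehat{\theta})\le g(\theta)\le S_{\mathrm{power}}(\widetilde{p},p_\theta;\widetilde{q})$ for all $\theta\in\Theta$, so $\widehat{\theta}$ solves \eqref{eqn:prob-min-cond-power-c1}.

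The only genuinely delicate point is the local-versus-global distinction in the first case: the formula $g=-(-S_{\mathrm{sphere}})^{1+\gamma}$ is valid only on the region where $c_{\mathrm{reg}}(\theta)<1$, so global optimality of $\widehat{\theta}$ for $g$ transfers only to a local statement about $S_{\mathrm{sphere}}$ — this is exactly why \eqref{eqn:prob-min-cond-sphere} appears with "local optimal" while \eqref{eqn:prob-min-cond-power-c1} does not. The remaining verifications (positivity of $A_\theta,B_\theta$, attainment of the inner minimum on $(0,1]$, and strict monotonicity of the two reparametrizing maps) are routine, and the whole argument is a direct translation of the proof of Theorem~\ref{theorem:Holder-min-alg} to the conditional density-power/pseudo-spherical pair.
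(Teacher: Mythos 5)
Your proof is correct and follows essentially the same route as the paper, which omits the proof precisely because it is the conditional-score translation of the proof of Theorem~\ref{theorem:Holder-min-alg}: explicit one-dimensional minimization over $c$ giving $\widehat{c}=\min\{1,c_{\mathrm{reg}}(\widehat{\theta})\}$, the identity $\min_{c>0}S_{\mathrm{power}}(\widetilde{p},cp_\theta;\widetilde{q})=-(-S_{\mathrm{sphere}}(\widetilde{p},p_\theta;\widetilde{q}))^{1+\gamma}$, and the continuity/neighborhood argument for the local statement. Your explicit treatment of the $\widehat{c}=1$ case and of the monotone reparametrization $t\mapsto-(-t)^{1+\gamma}$ merely fills in details the paper leaves implicit.
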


We omit the proof, since the proof is almost the same as in
Theorem~\ref{theorem:Holder-min-alg}. 

Based on the above theorem, we present a simple optimization procedure of
the problem~\eqref{eqn:conditional-power-div-opt}. 
We assume that the problem \eqref{eqn:prob-min-cond-sphere} has the unique local optimal
solution, $\widetilde{\theta}\in\Theta$.  
If $0<c_{\mathrm{reg}}(\widetilde{\theta})\leq1$, the parameter
$(c_{\mathrm{reg}}(\widetilde{\theta}),\widetilde{\theta})$ 
is an optimal solution of \eqref{eqn:conditional-power-div-opt}.
Otherwise, for $\theta=\bar{\theta}$ that is an optimal solution of 
\eqref{eqn:prob-min-cond-power-c1}, 
the point $(c,\theta)=(1,\bar{\theta})$ 
is an optimal solution of \eqref{eqn:conditional-power-div-opt}. 
Even if the assumptions in the above argument are violated, 
we can exploit the standard non-linear constrained optimization methods 
such as active set methods. 
Since the constrained inequality $0<c\leq1$ is easy to deal with, 
the non-linear optimization methods will also efficiently work to solve
\eqref{eqn:conditional-power-div-opt}.

\subsection{Location-Scale Models for Heterogeneous Contamination}
\label{subsec:Heterogeneous_Contamination}
We consider the regression problems under the non-constant contamination ratio. 
Suppose that the contaminated conditional probability density of the target $p_0(y|x)$ is
expressed as 
\begin{align*}
 p(y|x)=c_0(x)p_0(y|x)+(1-c_0(x))w(y|x), 
\end{align*}
where $w(y|x)$ denotes the conditional distribution of extreme outliers. 
The contamination ratio $1-c_0(x)$ typically lies in $[0,1/2)$, i.e., 
$1/2<c_0(x)\leq1$ holds at each $x$. We assume $0<c_0(x)\leq1$. 
The situation such that the ratio $c_0$ may depend on the independent variable is 
called heterogeneous contamination. 
To deal with the heterogeneous contamination, 
we assume that the target $p_0(y|x)$ is represented as the location scale model 
\begin{align*}
 p_\theta(y|x)=\frac{1}{\sigma}\,s\left(\frac{y-f_\beta(x)}{\sigma}\right),\ 
 \theta=(\beta,\sigma), \ \sigma>0, 
\end{align*}
where $s(y)$ is a probability density on $\Rbb$ with the mean zero and the unit variance. 
The parameter $\sigma$ denotes the standard deviation, 
and $f_\beta(x)$ with the parameter $\beta$ is the regression function. 
Let us assume that $p_0(y|x)=p_{\theta_0}(y|x)$ holds for a parameter
$\theta_0\in\Theta$. 
The enlarged location scale model is defined as 
$m_{\xi}(y|x)=c{}p_\theta(y|x),\,\xi=(c,\theta)$ for $0<c\leq1$ and $\theta\in\Theta$. 
We show that the constant parameter $c$ efficiently works even under 
heterogeneous contamination. 

The conditional density-power score defined in Section
\ref{subsec:Homogeneous_Containation} is employed. 
The empirical approximation $S_{\mathrm{power}}(\widetilde{p},cp_\theta;\widetilde{q})$ 
converges in probability to $S_{\mathrm{power}}(p,cp_\theta;q)$. 
Let us consider the optimal solution of $\min_{c,\theta}S_{\mathrm{power}}(p,cp_\theta;q)$
under heterogeneous contamination. The direct calculation yields that
\begin{align*}
S_{\mathrm{power}}(p,cp_\theta;q)
&=
S_{\mathrm{power}}(c_0p_0,cp_\theta;q)
 -(1+\gamma)c^\gamma\int(1-c_0(x))q(x)\varepsilon_\theta(x)dx, 
\end{align*}
where $\varepsilon_\theta(x)$ is defined in \eqref{eqn:eps-x}. 
Suppose that $\varepsilon_\theta(x)$ is sufficiently small at each $x$ around
$\theta=\theta_0$. 
Then, the second term of the right-side in the above expression will be negligible, and 
the optimal solution of $S_{\mathrm{power}}(p,cp_\theta;q)$ will be close to the optimal
solution of $S_{\mathrm{power}}(c_0p,cp_\theta;q)$ in which $c_0$ may depend on $x$. 

Let us consider the minimization problem
\begin{align}
 \label{eqn:hetero-reg-c-free}
 \min_{c,\,\theta} S_{\mathrm{power}}(c_0p_0,cp_\theta;q),  \quad c>0,\ \theta\in\Theta. 
\end{align}
We revisit the constraint $c\leq1$ later. 
Using the same idea as in Theorem~\ref{theorem:opt-algorithm-relation-regression}, 
we obtain the inequality 
\begin{align*}
 S_{\mathrm{power}}(c_0p_0,cp_\theta;q)
 \geq
 -\left( \frac{\<c_0p_0qp_\theta^\gamma\>}{\<qp_\theta^{1+\gamma}\>^{\gamma/(1+\gamma)}}
 \right)^{1+\gamma}. 
\end{align*}
The equality holds by setting 
\begin{align*}
 c=\frac{\<c_0p_0qp_\theta^\gamma\>}{\<qp_\theta^{1+\gamma}\>}. 
\end{align*}
In the integral of the location-scale model, the variable change 
$z=(y-f_\beta(x))/\sigma$ produces the equality, 
\begin{align*}
\<p_\theta(\cdot|x)^{1+\gamma}\> =\sigma^{-\gamma}\int{}s(z)^{1+\gamma}dz, 
\end{align*}
i.e., the integral $\int{}p_\theta(y|x)^{1+\gamma}dy$ does not depend on $x$, 
and then, $\<qp_\theta^{1+\gamma}\>=\<p_\theta(\cdot|x)^{1+\gamma}\>$. 
Hence, we obtain 
\begin{align*}
\frac{\<c_0p_0qp_\theta^\gamma\>}{\<qp_\theta^{1+\gamma}\>^{\gamma/(1+\gamma)}}
& =
 \int{}c_0(x)q(x)\frac{\<p_0(\cdot|x)p_\theta(\cdot|x)^\gamma\>}
 {\<p_\theta(\cdot|x)^{1+\gamma}\>^{\gamma/(1+\gamma)}}dx\\
& = 
 -\int{}c_0(x)q(x) S_{\mathrm{sphere}}(p_0(\cdot|x),p_\theta(\cdot|x))dx. 
\end{align*}
Therefore, the optimization of the conditional density-power score is represented as 
\begin{align*}
 \min_{c,\,\theta}S_{\mathrm{power}}(c_0p_0,cp_\theta;q)=
 \min_{\theta}
 -\left( -\int{}c_0(x)q(x) S_{\mathrm{sphere}}(p_0(\cdot|x),p_\theta(\cdot|x))dx\right)^{1+\gamma}.
\end{align*}
The optimal solution of the pseudo-spherical score at each $x$ is given as
$\theta=\theta_0$. 
For the optimal parameter $\theta_0$, the optimal ratio is presented as 
\begin{align*}
 c
 =\frac{\<c_0qp_0^{1+\gamma}\>}{\<qp_{0}^{1+\gamma}\>}
 =\int{}c_0(x)q(x)dx\leq1, 
\end{align*}
where the property of the location-scale models is used in the integral. 

In summary, the optimal solution of the problem  \eqref{eqn:hetero-reg-c-free}
is given by the target mode parameter $\theta=\theta_0$ and 
the expected ratio $c=\int{}c_0(x)q(x)dx$. 
Since the expected ratio is less than or equal to $1$, 
the problem  \eqref{eqn:hetero-reg-c-free} with the additional constraint $c\leq1$ has the
same optimal solution. 
The expected contamination ratio is obtained by $1-\int{}c_0(x)q(x)dx$. 
Therefore, the minimization of the empirical approximate 
$S_{\mathrm{power}}(\widetilde{p},cp_\theta;\widetilde{q})$
will produce an estimator of the target model parameter and the expected contamination ratio 
even under heavy heterogeneous contamination. 

The minimization problem of 
the empirical approximate, $S_{\mathrm{power}}(\widetilde{p},cp_\theta;\widetilde{q})$, 
is common in the homogeneous and heterogeneous situations. 
Hence, Theorem~\ref{theorem:opt-algorithm-relation-regression} 
with the location-scale model also holds in the current situation. 
For the location-scale model, 
the integral in $\<\widetilde{q}p_\theta^{1+\gamma}\>$ 
is expressed as $\sigma^{-\gamma}\int{}s(z)^{1+\gamma}dz$. 
Once the integral of $s(z)^{1+\gamma}$ is computed, 
any additional integral is not required in the process of the optimization. 
This is a computational advantage of the location-scale model.

\section{Numerical Experiments}
\label{sec:Numerical_Experiments}
We conducted numerical experiments to evaluate the statistical properties
of robust estimators including the preceding technical developments. 
First, synthetic datasets for density estimation problems and regression problems were
employed. 
Then, benchmark datasets were used to compare robust estimators for regression problems. 
We borrowed the setup of regression problems from \cite{yu12:_polyn_form_robus_regres}.

\subsection{Synthetic data}
\label{subsec:Synthetic}

First, we show illustrative examples of robust estimation. 
\paragraph{Density Estimation:}
The training samples $x_1,\ldots,x_{n}\in\Rbb^2$ were drawn from the two-dimensional
standard normal distribution $N_2(\0,I)$, where $\0$ is the zero vector and $I$ is the identity matrix. 
To seed the outliers, $20\%$ of the training samples were randomly chosen and 
their values were replaced with the samples each component of which was generated from the
normal distribution $N(10,10^2)$. The sample size was set to $n=50$. 
Figure~\ref{fig:plot-normal-robustest} depicts the scatter plot of the 
observations including outliers. 
The statistical model $p_\theta(x)$ is the full-model of the two-dimensional normal
distribution $N_2({\bm\mu},\Sigma)$, i.e., 
the five dimensional parameter $\theta$ consists of the mean vector and the
variance-covariance matrix. 
The estimated parameter based on the maximum likelihood estimator was given as 
\begin{align*}
 \widehat{\bm\mu}_{\mathrm{MLE}}=
 \begin{pmatrix}
  2.70\\ 1.86
 \end{pmatrix},\quad 
 \widehat{\Sigma}_{\mathrm{MLE}}
 =
 \begin{pmatrix} 
 39.40 &20.76\\
 20.76 &20.28
\end{pmatrix}. 
\end{align*}
As the robust estimator, we employed the density-power score $S_{\mathrm{power}}$ with
$\gamma=0.1$ and the enlarged model $cp_\theta(x)$. 
Then, the estimated parameter of the target density $N_2(\0,I)$ was
\begin{align*}
 \widehat{\bm\mu}=\begin{pmatrix}
		   0.05\\ 0.11\end{pmatrix},\quad 
 \widehat{\Sigma} =
 \begin{pmatrix}    0.91& -0.03 \\ -0.03&  0.85\end{pmatrix}. 
\end{align*}
In addition, the proposed method provided the estimator of the contamination ratio
$1-\widehat{c}$. By picking up $50(1-\widehat{c})$ samples in ascending order of
the estimated values $p_{\widehat{\theta}}(x_i)$, 
one can identify the outliers. 
In this example, the estimated contamination ratio was $0.198$, and 
the detected outliers are indicated as the triangle points in
Figure~\ref{fig:plot-normal-robustest}. 

 \begin{figure}
 \begin{center}
   \includegraphics[scale=0.4]{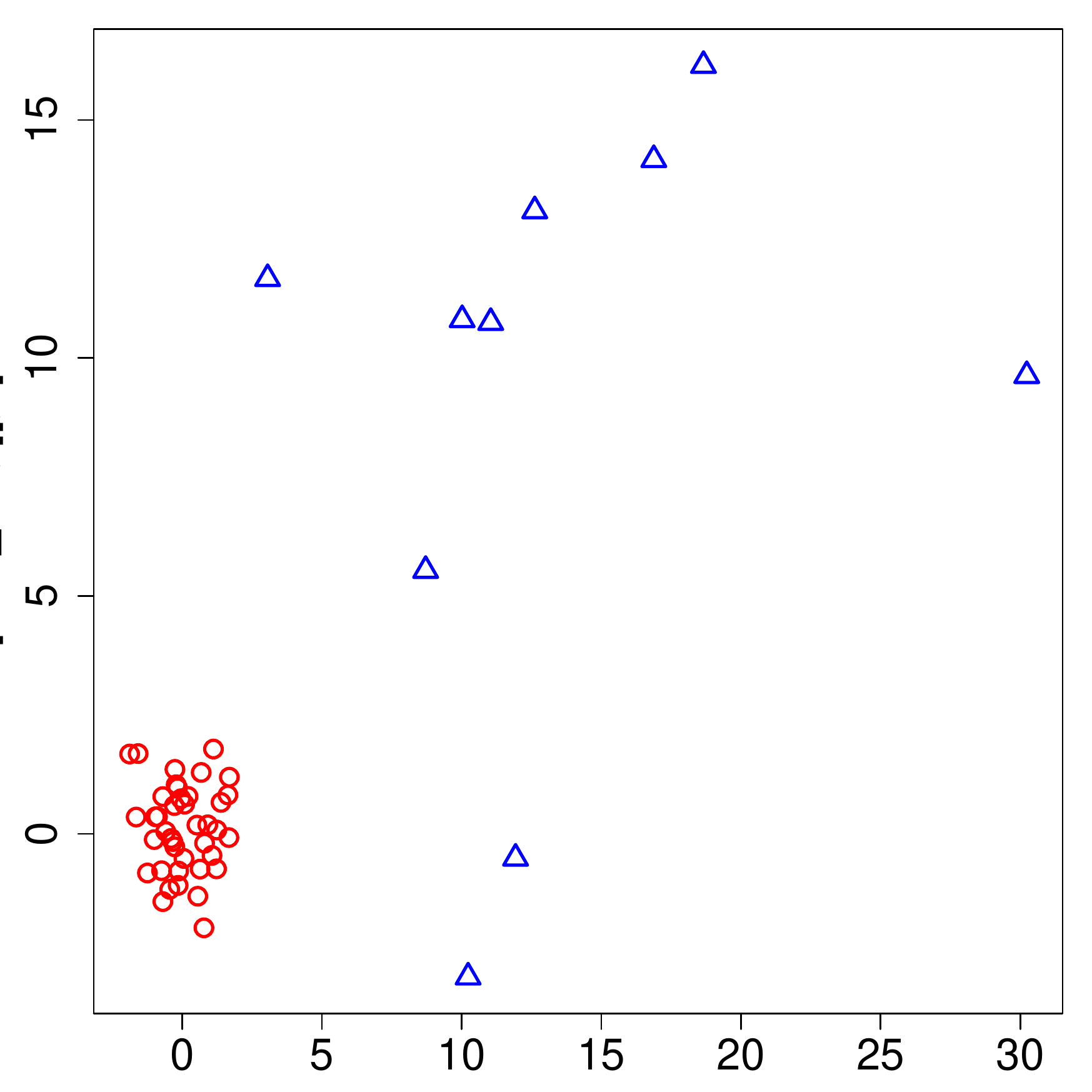}
 \end{center} 
  \caption{The scatter plot of training samples. 
  The triangle points indicate the detected outliers by our methods. }
  \label{fig:plot-normal-robustest}
\end{figure}

\paragraph{Regression:}
Let us consider the simple linear regression problem. 
The independent variable $x\in\Rbb$ was drawn from the standard normal distribution
$N(0,1)$, 
and the target density $p_0(y|x)$ was defined from the regression function
$y=1+10x+\varepsilon$, 
where the noise $\varepsilon$ is generated from $N(0,1)$. 
As the outlier $(x,y)$, 
$x$ was drawn from $N(1,0.8^2)$ and $y$ was 
the absolute value of the random variable drawn from $N(0,70^2)$. 
The left panel of Figure~\ref{fig:plot-robustest} depicts the scatter plot of 
the observations including outliers. 
The sample size was $50$, and the expected contamination ratio was set to $1-c_0=0.3$. 
The right panel presents the estimated regression functions based on the least square
estimator and the proposed method using the density-power score with $\gamma=0.1$. 
Our approach produced a reasonable result, while the least square estimator was
significantly affected by the outliers.  
By picking up $50(1-\widehat{c})$ samples in ascending order of the estimated value of the
conditional probability density, $p_{\widehat{\theta}}(y_i|x_i)$, 
one can identify the outliers. 
The estimated contamination ratio was $0.265$, and 
the triangle points denote the detected outliers. 
\begin{figure}
 \begin{center}
  \begin{minipage}{.45\textwidth}
   \begin{center}
    \scalebox{0.4}{\includegraphics{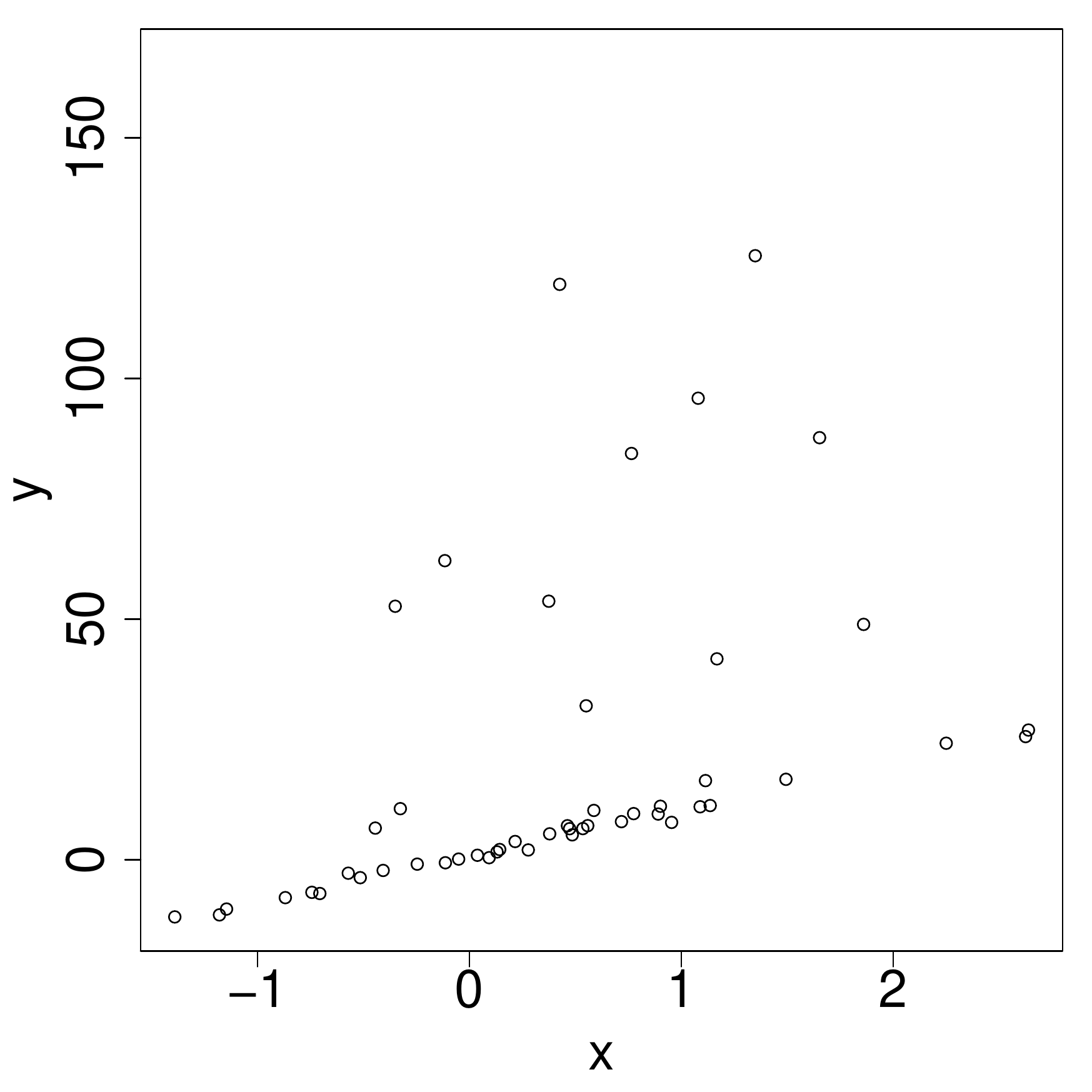}}
   \end{center}
  \end{minipage}
   \hspace*{10mm}
  \begin{minipage}{.45\textwidth}
   \begin{center}
    \scalebox{0.4}{\includegraphics{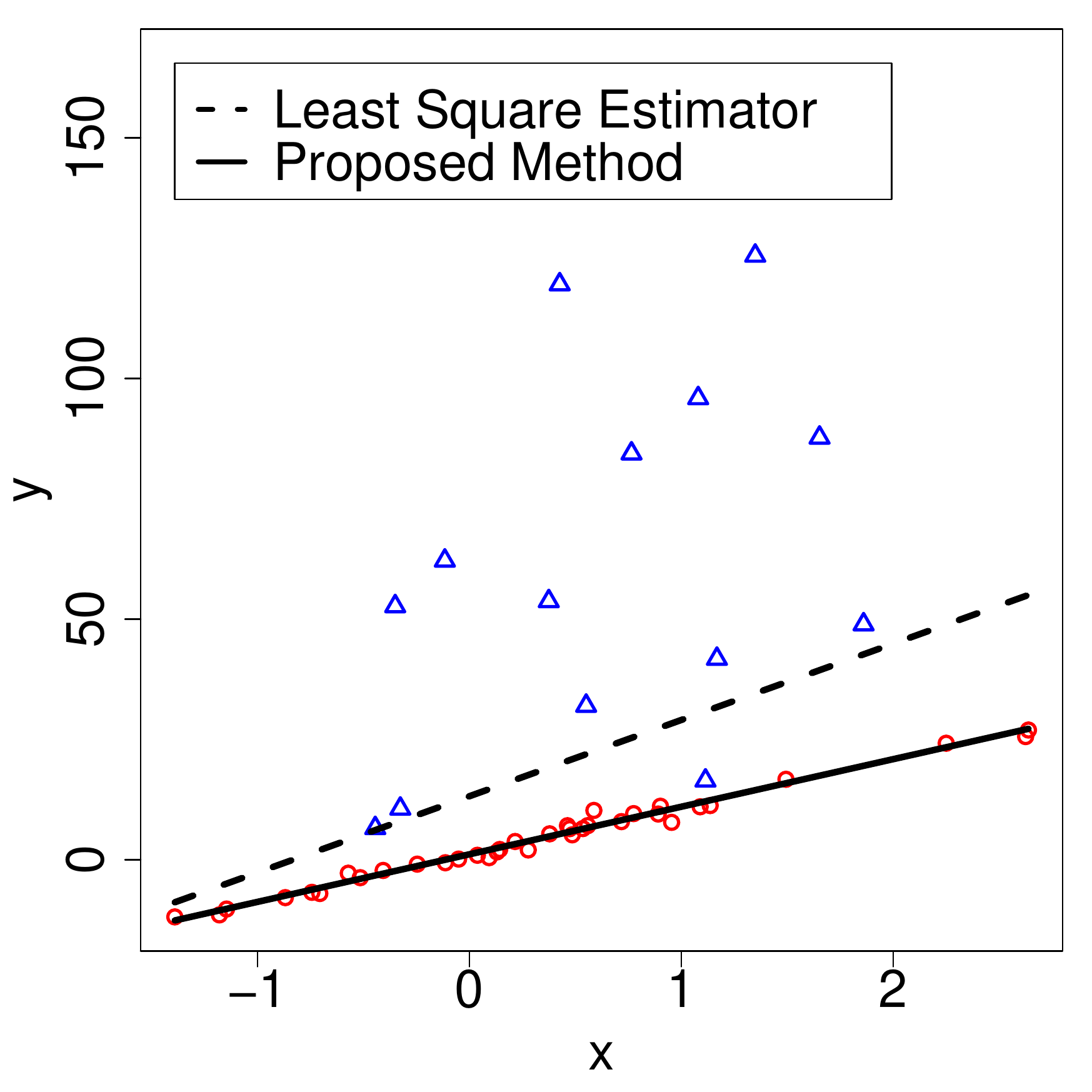}}
   \end{center}
  \end{minipage}
  \caption{Left panel: the scatter plot of training samples for regression estimation. 
  Right panel: The solid line is the estimated regression function by our methods, 
  and the broken line is the estimation result of the least square estimator. 
  The triangle points are identified as the outliers by our methods. }
 \label{fig:plot-robustest}
 \end{center}
\end{figure}

Next, we present numerical experiments of linear regression problems under heavy contamination. 
The problem setup is similar to the setup in~\cite{yu12:_polyn_form_robus_regres}. 
For $x\in\Rbb^d, d=5$ and $y\in\Rbb$, the target density $p_0(y|x)$ was defined from
the regression function $y=x^T\theta_0+\varepsilon$, where the target parameter 
$\theta_0$ was generated from the multivariate normal distribution $N_d(\0,I)$. 
The distribution of the noise $\varepsilon$ was the normal distribution $N(0,1/4)$, and 
the independent variable $x$ was drawn from the uniform distribution on $[0,1]^d$. 
The estimation accuracy was evaluated on $1000$ test points that were drawn from 
the joint probability of $(x,y)$ in the above. 

Let us consider two setups for contamination. 
In the first setup, each dependent variable $y_i$ was re-sampled as the outlier from
$N(0,10^8)$ with the contamination probability $1-c_0$, 
while the independent variable was not changed. 
In the second, both $x$ and $y$ were resampled from $N(0,10^4)$ and $N(0,10^8)$, 
respectively. 
The estimators using enlarged models are designed to deal with heavy contamination in the
first setup. 
We present that the proposed methods efficiently work even in the second setup. 

In the regression problems, the following methods were compared: 
least square method (L2), 
median regression estimator base on $L_1$-loss (L1), 
robust estimator using Huber loss (Huber)~\cite{huber64:_robus},
least trimmed square method (LTS)~\cite{rousseeuw06:_comput_lts_regres_large_data_sets},
robust estimator using the bounded Geman-McClure loss (GemMc)~\cite{black96}, 
robust MM-estimator (MM-est)~\cite[Chap.~5]{maronna06:_robus_statis}, 
and the proposed method using the density-power score with enlarged model ($S_{\mathrm{power}}$). 
The LTS method requires an estimate of the contamination ratio. 
In our experiments, the true ratio $1-c_0$ was fed to the LTS method. 
In the present setup, the linear regression model includes the intercept, 
while the regression model used in \cite{yu12:_polyn_form_robus_regres} did not have the intercept. 
The model $p_\theta(y|x)$ with the parameter
$\theta=({\bm\beta}_0,\beta_1,\sigma)$ was defined from 
$y=\beta_1+{\bm\beta}_0^T\x+\varepsilon,\,\varepsilon\sim{}N(0,\sigma^2)$, and 
the enlarged model was given as $cp_\theta(y|x)$. 

For each estimator, we computed the averaged root mean square errors (RMSE) over 100
iterations. 
The contamination ratio estimated by using the proposed methods is also presented. 
The upper part of Table~\ref{tbl:normaldist-synthetic}
reports the numerical results of the first setup, i.e.,
contamination only for the dependent variable. 
When the samples were not contaminated, 
all estimators efficiently worked as shown in the left column of the table. 
Indeed, the all RMSEs were close to optimal value $1/2$, i.e., 
the standard deviation of the noise $\varepsilon$. 
This result is almost the same as that in \cite{yu12:_polyn_form_robus_regres}. 
As shown in the middle and right columns, 
the least square method and Huber estimator tended to be affected by outliers. 
The lower part of Table~\ref{tbl:normaldist-synthetic} reports the results of the second setup. 
In addition to L2 and Huber, the L1-estimator was degraded by outliers. 
Even under heavy contamination, GemMc, MM-est and the proposed method performed well. 
We also found that the estimator $S_{\mathrm{power}}$ was useful for the estimation of the
contamination ratio even under the second setup. In this experiments, the choice of
$\gamma$ in the density-power score did not significantly affect the estimation 
accuracy.

\begin{table}[t]
\centering
\begin{tabular}{c|ccc}
                                    & \multicolumn{3}{c}{Outlier Probability for variable $y$} \\ \cline{2-4}
  Methods                           & {\ $1-c_0=0.0$\ }             & $1-c_0=0.2$              &   {$1-c_0=0.4$}      \\ \hline
   L2                               & 0.51 $\pm$  0.01 & 1093.9 $\pm$ 358.97&1528.91 $\pm$ 454.99   \\
   L1                               & 0.52 $\pm$  0.01 & 0.54   $\pm$ 0.02  &  0.58  $\pm$ 0.05     \\
   Huber                            & 0.52 $\pm$  0.01 & 1.40   $\pm$ 0.21  &621.76  $\pm$ 335.89   \\
   LTS                              & 0.52 $\pm$  0.01 & 0.52   $\pm$ 0.01  & 15.64  $\pm$ 19.07    \\
   GemMc                            & 0.52 $\pm$  0.01 & 0.52   $\pm$ 0.02  &  0.54  $\pm$ 0.02     \\
   MM-est                           & 0.52 $\pm$  0.01 & 0.52   $\pm$ 0.01  &  0.53  $\pm$ 0.02     \\
$S_{\mathrm{power}}\ (\gamma=0.1)$  & 0.52 $\pm$  0.01 & 0.52   $\pm$ 0.01  &  0.53  $\pm$ 0.02     \\
$S_{\mathrm{power}}\ (\gamma=0.5)$  & 0.52 $\pm$  0.01 & 0.52   $\pm$ 0.01  &  0.54  $\pm$ 0.02     \\
$S_{\mathrm{power}}\ (\gamma=1.0)$  & 0.53 $\pm$  0.02 & 0.54   $\pm$ 0.03  &  0.56  $\pm$ 0.03     \\ \hline
$1-\widehat{c}\ (\gamma=0.1)$         & 0.00 $\pm$  0.00 & 0.17   $\pm$ 0.07  &  0.36  $\pm$ 0.13     \\
$1-\widehat{c}\ (\gamma=0.5)$         & 0.01 $\pm$  0.01 & 0.19   $\pm$ 0.05  &  0.40  $\pm$ 0.04     \\
$1-\widehat{c}\ (\gamma=1.0)$         & 0.06 $\pm$  0.10 & 0.25   $\pm$ 0.09  &  0.42  $\pm$ 0.15     \\
\end{tabular}
\centering
\vspace*{5mm}

\begin{tabular}{c|ccc}
                                    & \multicolumn{3}{c}{Outlier Probability for variables $(\x,y)$} \\ \cline{2-4}
  Methods                           &   {\ $1-c_0=0.0$\ }       & $1-c_0=0.2$         & $1-c_0=0.4$              \\ \hline
   L2                               &  0.52 $\pm$  0.01 &313.76 $\pm$ 232.81         & 532.41 $\pm$ 353.29      \\
   L1                               &  0.52 $\pm$  0.01 &\!\!\!\!\! 18.80 $\pm$ 6.73 & \!\!\!13.71  $\pm$ 5.21  \\
   Huber                            &  0.52 $\pm$  0.01 &\!\!\!\!\! 18.98 $\pm$ 6.96 & 60.11  $\pm$ 64.27       \\
   LTS                              &  0.52 $\pm$  0.01 &1.17   $\pm$ 0.79 	     & 1.23   $\pm$ 0.63        \\
   GemMc                            &  0.52 $\pm$  0.01 &0.53   $\pm$ 0.02 	     & 0.54   $\pm$ 0.02        \\
   MM-est                           &  0.52 $\pm$  0.01 &0.52   $\pm$ 0.01 	     & 0.54   $\pm$ 0.02        \\
$S_{\mathrm{power}}\ (\gamma=0.1)$  &  0.52 $\pm$  0.01 &0.52   $\pm$ 0.01	     & 0.53   $\pm$ 0.02        \\
$S_{\mathrm{power}}\ (\gamma=0.5)$  &  0.52 $\pm$  0.01 &0.52   $\pm$ 0.02	     & 0.54   $\pm$ 0.02        \\
$S_{\mathrm{power}}\ (\gamma=1.0)$  &  0.53 $\pm$  0.02 &0.54   $\pm$ 0.03           & 0.56   $\pm$ 0.05        \\ \hline
$1-\widehat{c}\ (\gamma=0.1)$         &  0.00 $\pm$  0.00 & 0.17 $\pm$ 0.07            & 0.33   $\pm$ 0.15        \\
$1-\widehat{c}\ (\gamma=0.5)$         &  0.01 $\pm$  0.01 & 0.20 $\pm$ 0.04            & 0.40   $\pm$ 0.04        \\
$1-\widehat{c}\ (\gamma=1.0)$         &  0.05 $\pm$  0.06 & 0.26 $\pm$ 0.10            & 0.41   $\pm$ 0.16        \\
\end{tabular}
 \caption{
 Top table shows the results in the case that only the dependent variable $y$ is incurred
 by outliers. Bottom table shows the results in the case that both independent and
 dependent variables $(x,y)$ are contaminated. 
 In the synthetic regression problems, 
 RMSEs on 10000 clean test samples are computed. 
 The training sample size is 100, and an observation consists of the pair of the
 $5$-dimensional independent variable and a dependent variable. 
 The contamination ratio $1-c_0$ is set to $0.0$ (clean training data), $0.2$ or
 $0.4$. 
 For each estimator, we employed the linear regression model with the intercept. 
 The averaged RMSE over 100 iterations is computed, and 
 also the estimated contamination ratio is presented. 
 }
 \label{tbl:normaldist-synthetic}
\end{table}

\subsection{Benchmark data}
\label{subsec:Benchmark_data}
We used four benchmark datasets taken from the StatLib repository and DELVE: 
cal-housing, abalone, pumadyn-32fh, and bank-8fh. 
These were the same as the datasets used in \cite{yu12:_polyn_form_robus_regres}. 
Cal-housing dataset has 8 features and one dependent variable (median House Value). 
Abalon dataset has originally 8 features and one output (rings). However, one discrete
feature, ``Gender or Infant'', is removed, and we use 7 features and one 
dependent variable. 
Pumadyn-32fh has 32 features and one output variable (ang acceleration of joint 6). 
Bank-8fh has 8 features and one output variable (rejection rate). 
The dependent variable of bank-8fh dataset denotes the probability, and hence, 
the logistic regression would be appropriate to analyze bank-8fh. 
However, we dealt with the rejection rate just as a real number 
in order to investigate the robustness property of the regression estimators. 

For each dataset, 100 training samples and 1000 test samples were randomly selected. 
Let us consider two kinds of contamination, i.e., contamination of only dependent
variable ($y$-contamination),  
and that of both independent and dependent variables ($(x,y)$-contamination). 
To seed outliers, some amount of training samples were randomly chosen, 
and their $y$ values were multiplied by 10000 in the first setup. 
In the second setup, $x$ values were also multiplied by 100. 
The contamination ratio was set to $1-c_0=0.05, 0.2$ or $0.4$, while 
only the case of $1-c_0=0.05$ was examined in \cite{yu12:_polyn_form_robus_regres}. 

For the model fitting, we employed the linear regression model with the intercept. 
In addition, the normal distribution was assumed for the conditional probability model
$p_\theta(y|x)$. 
In~\cite{yu12:_polyn_form_robus_regres}, the regularization technique was used. 
In the numerical experiments of this article, we did not use the regularization, 
since the regression model used in the experiments was rather simple. 
Again, the true contamination ratio $1-c_0$ was used in the LTS estimator. 

Table~\ref{tbl:benchmark-y_out} (resp. Table~\ref{tbl:benchmark-xy_out}) reports the
RMSE on the test samples under the setup of $y$-contamination
(resp. $(x,y)$-contamination). 
As shown in \cite{maronna06:_robus_statis,yu12:_polyn_form_robus_regres}, 
any estimator based on minimizing a convex loss such as L2, L1, and Huber 
was sensitive to even small amount of outliers. 
Under the heavy contamination, also the LTS estimator was degraded by the outliers. 
Other estimators, GemMc, MM-est and $S_{\mathrm{power}}$ were not degraded even under 
heavy contamination. 
In both $y$-contamination and $(x,y)$-contamination, 
$S_{\mathrm{power}}$ with $\gamma=0.1$ efficiently performed for the estimation 
of the model parameter $\theta$ and the contamination ratio $1-c_0$. 
Also, other estimators based on non-convex losses such as GemMc, MM-est and
$S_{\mathrm{power}}$ with $\gamma=0.5$ and $1.0$ provided rather stable results. 
In Pumadyn-32fh dataset, the MM-est performed worse. 
In our experiments, the MM-est got sensitive for fairly high-dimensional data. 
When the estimator was trapped in local minima, the estimation accuracy was not high. 
The estimator $S_{\mathrm{power}}$ with a small $\gamma$ was expected to have the unique
local minima. Thus, the problematic local minima would be avoided. 
In practice, $S_{\mathrm{power}}$ with $\gamma=0.1$ provided an accurate estimator.

\begin{table}[t]\small
\centering
\begin{tabular}{c|cccc}
                                    & \multicolumn{4}{c}{Outlier Probability for $y$: $5\%$} \\ \cline{2-5}
  Methods                           & cal-housing            & abalone              &  pumadyn-32fh  & bank-8fh  \\ \hline
   L2                                &13963.18 $\pm$  8868.55  &8739.79 $\pm$  3095.99    &38.125 $\pm$  12.825 &170.797$\pm$ 86.748  \\		    
   L1                                &964.23   $\pm$  6157.65  &2.42    $\pm$  0.17 	    & 0.028 $\pm$  0.002  & 0.079 $\pm$ 0.005  \\		    
   Huber                             &969.03   $\pm$  6201.87  &2.69    $\pm$  0.34 	    & 0.028 $\pm$  0.002  & 0.081 $\pm$ 0.006  \\		    
   LTS                               &0.43     $\pm$  0.12     &2.38    $\pm$  0.15 	    & 0.025 $\pm$  0.001  & 0.077 $\pm$ 0.004  \\		    
   GemMc                             &0.42     $\pm$  0.10     &2.65  $\pm$  0.20	    & 0.025 $\pm$  0.001  & 0.077 $\pm$ 0.004  \\		    
   MM-est                            &0.46     $\pm$  0.18     &2.42    $\pm$  0.15 	    & 0.026 $\pm$  0.002  & 0.078 $\pm$ 0.004  \\		    
$S_{\mathrm{power}}\ (\gamma=0.1)$   &0.42     $\pm$  0.11     &2.36    $\pm$  0.14 	    & 0.025 $\pm$  0.001  & 0.076 $\pm$ 0.003  \\		    
$S_{\mathrm{power}}\ (\gamma=0.5)$   &0.42     $\pm$  0.08     &2.46    $\pm$  0.15 	    & 0.033 $\pm$  0.004  & 0.079 $\pm$ 0.004  \\		    
$S_{\mathrm{power}}\ (\gamma=1.0)$   &0.46     $\pm$  0.16     &2.56    $\pm$  0.18 	    & 0.032 $\pm$  0.003  & 0.083 $\pm$ 0.007  \\ \hline	    
$1-\widehat{c}\ (\gamma=0.1)$&0.02     $\pm$  0.00     &0.05  $\pm$  0.00	    & 0.050 $\pm$  0.005  & 0.051 $\pm$ 0.002  \\		    
$1-\widehat{c}\ (\gamma=0.5)$&0.00     $\pm$  0.00     &0.13  $\pm$  0.04	    & 0.009 $\pm$  0.089  & 0.097 $\pm$ 0.030  \\		    
$1-\widehat{c}\ (\gamma=1.0)$&0.01     $\pm$  0.02     &0.25    $\pm$  0.16       & 0.000 $\pm$  0.000  & 0.166 $\pm$ 0.142  \\             
\end{tabular}
\vspace*{5mm}

\begin{tabular}{c|cccc}
                                    & \multicolumn{4}{c}{Outlier Probability for $y$: $20\%$} \\ \cline{2-5}
  Methods                           & cal-housing             & abalone             &pumadyn-32fh   & bank-8fh  \\ \hline
   L2                                &33437.96 $\pm$ 13734.96 & 24434.80  $\pm$  3626.46   & 86.000 $\pm$  17.746  &474.450 $\pm$  128.359  \\		    
   L1                                &3131.27  $\pm$ 8743.01  &\    16.32 $\pm$   136.38   &  0.313 $\pm$   2.731  & 0.085  $\pm$  0.008    \\		    
   Huber                             &5864.10  $\pm$ 12788.48 &\   404.18 $\pm$  2136.62   & 41.666 $\pm$  21.285  & 0.740  $\pm$  0.299    \\		    
   LTS                               &13667.6  $\pm$ 39274.54 &\   158.53 $\pm$  1084.07   & 12.867 $\pm$  7.737  & 0.754  $\pm$  1.263    \\		    
   GemMc                             & 0.43    $\pm$ 0.12     & 2.67    $\pm$   0.23     &  0.027 $\pm$   0.004  & 0.078  $\pm$  0.004    \\		    
   MM-est                            & 0.47    $\pm$ 0.19     & 2.40    $\pm$   0.17     &  0.027 $\pm$   0.002  & 0.078  $\pm$  0.004    \\		    
$S_{\mathrm{power}}\ (\gamma=0.1)$   & 0.43    $\pm$ 0.12     & 2.40    $\pm$   0.18     &  0.027 $\pm$   0.002  & 0.078  $\pm$  0.004    \\		    
$S_{\mathrm{power}}\ (\gamma=0.5)$   & 0.44    $\pm$ 0.11     & 2.47    $\pm$   0.18     &  0.035 $\pm$   0.005  & 0.080  $\pm$  0.004    \\		    
$S_{\mathrm{power}}\ (\gamma=1.0)$   & 0.50    $\pm$ 0.25     & 2.59    $\pm$   0.23     &  0.034 $\pm$   0.004  & 0.082  $\pm$  0.006    \\ \hline	    
$1-\widehat{c}\ (\gamma=0.1)$& 0.17    $\pm$ 0.00     & 0.20    $\pm$   0.00     &  0.171 $\pm$   0.072  & 0.176  $\pm$  0.065    \\		    
$1-\widehat{c}\ (\gamma=0.5)$& 0.10    $\pm$ 0.01     & 0.27    $\pm$   0.03     &  0.000 $\pm$   0.000  & 0.230  $\pm$  0.076    \\		    
$1-\widehat{c}\ (\gamma=1.0)$& 0.04    $\pm$ 0.05     & 0.26      $\pm$   0.21     &  0.000 $\pm$   0.000  & 0.160  $\pm$  0.197    \\             
\end{tabular}
\vspace*{5mm}

\begin{tabular}{c|cccc}
                                    & \multicolumn{4}{c}{Outlier Probability for $y$: $40\%$} \\ \cline{2-5}
  Methods                           & cal-housing              & abalone                     &pumadyn-32fh  & bank-8fh  \\ \hline
   L2                                &56697.75 $\pm$ 10554.54  & 44463.78 $\pm$   4040.32    &\!\!127.70 $\pm$ 18.67 & 915.905 $\pm$ 177.846 \\		    
   L1                                &31030.68 $\pm$ 30800.29  & 11495.54 $\pm$  16592.47    &  76.54 $\pm$ 32.96 & 124.731 $\pm$ 251.276 \\		    
   Huber                             &56757.25 $\pm$ 10565.04  & 42517.89 $\pm$  4353.52     &\!\!113.72 $\pm$ 20.57 & 689.616 $\pm$ 266.240  \\		    
   LTS                               &86798.76 $\pm$ 114753.60 & 9856.11  $\pm$  4539.19     &  62.15 $\pm$ 34.24 & 14.010  $\pm$  9.477   \\		    
   GemMc                             &0.48     $\pm$ 0.22      & 2.78     $\pm$  0.37        &   0.04 $\pm$ 0.02  & 0.080   $\pm$ 0.005   \\		    
   MM-est                            &0.51     $\pm$ 0.25      & 2.49     $\pm$  0.26        &  86.23 $\pm$ 33.46 & 0.080   $\pm$ 0.005   \\		    
$S_{\mathrm{power}}\ (\gamma=0.1)$   &0.49     $\pm$ 0.24      & 2.47     $\pm$  0.25        &   0.03 $\pm$ 0.00  & 0.080   $\pm$ 0.005   \\		    
$S_{\mathrm{power}}\ (\gamma=0.5)$   &0.52     $\pm$ 0.29      & 2.56     $\pm$  0.23        &  \ 5.98 $\pm$ 24.11 & 0.081   $\pm$ 0.005   \\		    
$S_{\mathrm{power}}\ (\gamma=1.0)$   &0.54     $\pm$ 0.31      & 2.58     $\pm$  0.23        &  14.08 $\pm$ 40.78 & 0.081   $\pm$ 0.005   \\ \hline	    
$1-\widehat{c}\ (\gamma=0.1)$& 0.37    $\pm$ 0.08      & 0.38     $\pm$  0.09        &   0.28 $\pm$ 0.16  & 0.192   $\pm$ 0.201   \\		    
$1-\widehat{c}\ (\gamma=0.5)$& 0.32    $\pm$ 0.06      & 0.45     $\pm$  0.06        &   0.00 $\pm$ 0.00  & 0.123   $\pm$ 0.198   \\		    
$1-\widehat{c}\ (\gamma=1.0)$& 0.10    $\pm$ 0.15      & 0.23     $\pm$  0.27        &   0.00 $\pm$ 0.00  & 0.010   $\pm$ 0.074   \\             
\end{tabular}
 \caption{
 The numerical results on benchmark datasets are presented. 
 The training samples size is 100, and the contamination ratio $1-c_0$ is set to 
 $0.05, 0.2$ or 
 $0.4$. Only the dependent variable $y$ is contaminated by the outliers. 
 For each estimator, we employed the linear regression model with the intercept, and computed
 the averaged RMSE over 100 iterations. Also the estimated contamination ratio is presented. 
}
 \label{tbl:benchmark-y_out}
\end{table}

\begin{table}[t]\small
\centering
\begin{tabular}{c|cccc}
                                    & \multicolumn{4}{c}{Outlier Probability for $(\x,y)$: $5\%$} \\ \cline{2-5}
  Methods                           & cal-housing            & abalone              &  pumadyn-32fh  & bank-8fh  \\ \hline
   L2                               & 241.76 $\pm$ 243.42&451.65 $\pm$ 283.76&1.152 $\pm$ 0.407&12.763 $\pm$ 4.752    \\
   L1                               & 262.60 $\pm$ 269.21&393.75 $\pm$ 209.54&1.246 $\pm$ 0.457&12.679 $\pm$ 4.773    \\
   Huber                            & 245.04 $\pm$ 256.31&402.75 $\pm$ 216.20&1.157 $\pm$ 0.412&12.635 $\pm$ 4.634    \\
   LTS                              & 0.43   $\pm$ 0.11  & 2.39  $\pm$ 0.18  &0.063 $\pm$ 0.045&0.078  $\pm$ 0.003    \\
   GemMc                            & 0.43   $\pm$ 0.11  & 2.65  $\pm$ 0.24  &0.026 $\pm$ 0.003&0.077  $\pm$ 0.003    \\
   MM-est                           & 0.47   $\pm$ 0.17  & 2.42  $\pm$ 0.21  &0.026 $\pm$ 0.002&0.078  $\pm$ 0.003    \\
$S_{\mathrm{power}}\ (\gamma=0.1)$  & 0.43   $\pm$ 0.11  & 2.38  $\pm$ 0.18  &0.026 $\pm$ 0.002&0.077  $\pm$ 0.003    \\
$S_{\mathrm{power}}\ (\gamma=0.5)$  & 0.45   $\pm$ 0.15  & 2.48  $\pm$ 0.23  &0.034 $\pm$ 0.004&0.079  $\pm$ 0.004    \\
$S_{\mathrm{power}}\ (\gamma=1.0)$  & 0.52   $\pm$ 0.27  & 2.62  $\pm$ 0.26  &0.032 $\pm$ 0.004&0.084  $\pm$ 0.006    \\ \hline
$1-\widehat{c}\ (\gamma=0.1)$         & 0.05   $\pm$ 0.00  & 0.05  $\pm$ 0.00  &0.05  $\pm$ 0.01 &0.05   $\pm$ 0.00     \\
$1-\widehat{c}\ (\gamma=0.5)$         & 0.08   $\pm$ 0.02  & 0.12  $\pm$ 0.03  &0.02  $\pm$ 0.12 &0.10   $\pm$ 0.03     \\
$1-\widehat{c}\ (\gamma=1.0)$         & 0.17   $\pm$ 0.13  & 0.24  $\pm$ 0.12  &0.00  $\pm$ 0.00 &0.18   $\pm$ 0.15     \\
\end{tabular}
\vspace*{5mm}

\begin{tabular}{c|cccc}
                                    & \multicolumn{4}{c}{Outlier Probability for $(\x,y)$: $20\%$} \\ \cline{2-5}
  Methods                            & cal-housing         & abalone             &pumadyn-32fh   & bank-8fh  \\ \hline
   L2                                & 340.83 $\pm$  281.14 &363.19 $\pm$ 117.88  &3.16 $\pm$ 0.73&15.048 $\pm$ 2.981  \\
   L1                                & 313.20 $\pm$  288.40 &325.39 $\pm$ 118.34  &3.29 $\pm$ 0.76&14.487 $\pm$ 3.260  \\
   Huber                             & 311.71 $\pm$  282.82 &326.34 $\pm$ 120.79  &3.17 $\pm$ 0.73&14.281 $\pm$ 3.143  \\
   LTS                               & 299.28 $\pm$  334.20 &86.27  $\pm$ 17.50   &0.32 $\pm$ 0.11&0.288  $\pm$ 0.174  \\
   GemMc                             & 0.45   $\pm$  0.17   &2.67 $\pm$ 0.24    &0.03 $\pm$ 0.01&0.078  $\pm$ 0.004  \\
   MM-est                            & 0.52   $\pm$  0.36   &2.40 $\pm$ 0.15    &0.08 $\pm$ 0.08&0.078  $\pm$ 0.004  \\
$S_{\mathrm{power}}\ (\gamma=0.1)$   & 0.45   $\pm$  0.18   &2.39 $\pm$ 0.15    &0.03 $\pm$ 0.01&0.078  $\pm$ 0.004  \\
$S_{\mathrm{power}}\ (\gamma=0.5)$   & 0.48   $\pm$  0.24   &2.50 $\pm$ 0.22    &0.07 $\pm$ 0.08&0.080  $\pm$ 0.005  \\
$S_{\mathrm{power}}\ (\gamma=1.0)$   & 0.51   $\pm$  0.25   &2.60 $\pm$ 0.25    &0.05 $\pm$ 0.03&0.083  $\pm$ 0.007  \\ \hline
$1-\widehat{c}\ (\gamma=0.1)$& 0.17   $\pm$  0.00   &0.20 $\pm$ 0.00    &0.18 $\pm$ 0.03&0.16   $\pm$ 0.08   \\
$1-\widehat{c}\ (\gamma=0.5)$& 0.10   $\pm$  0.01   &0.26 $\pm$ 0.04    &0.00 $\pm$ 0.00&0.22   $\pm$ 0.08   \\
$1-\widehat{c}\ (\gamma=1.0)$& 0.06   $\pm$  0.06   &0.32   $\pm$ 0.18    &0.00 $\pm$ 0.00&0.16   $\pm$ 0.20   \\
\end{tabular}
\vspace*{5mm}

\begin{tabular}{c|cccc}
                                    & \multicolumn{4}{c}{Outlier Probability for $(\x,y)$: $40\%$} \\ \cline{2-5}
  Methods                           & cal-housing        & abalone               &pumadyn-32fh   & bank-8fh  \\ \hline
   L2                                &312.92 $\pm$ 216.64&396.11 $\pm$ 135.79    & 4.36 $\pm$ 1.05&17.96 $\pm$ 5.83  \\
   L1                                &232.63 $\pm$ 264.15&\!\!276.02 $\pm$ 66.20 & 4.75 $\pm$ 1.04&14.06 $\pm$ 2.14  \\
   Huber                             &230.85 $\pm$ 257.14&\!\!278.29 $\pm$ 64.77 & 4.83 $\pm$ 1.13&13.84 $\pm$ 2.02  \\
   LTS                               &428.22 $\pm$ 432.53&\!\!111.59 $\pm$ 17.70 & 1.28 $\pm$ 0.33&0.48  $\pm$ 0.23  \\
   GemMc                             &0.47   $\pm$ 0.14  &2.73   $\pm$ 0.27      & 0.05 $\pm$ 0.03&0.08  $\pm$ 0.01  \\
   MM-est                            &0.50   $\pm$ 0.20  &2.50   $\pm$ 0.26      & 1.60 $\pm$ 0.54&0.08  $\pm$ 0.01  \\
$S_{\mathrm{power}}\ (\gamma=0.1)$   &0.47   $\pm$ 0.15  &2.49   $\pm$ 0.24      & 0.05 $\pm$ 0.03&0.08  $\pm$ 0.01  \\
$S_{\mathrm{power}}\ (\gamma=0.5)$   &0.48   $\pm$ 0.16  &2.56   $\pm$ 0.24      & 1.58 $\pm$ 0.86&0.08  $\pm$ 0.01  \\
$S_{\mathrm{power}}\ (\gamma=1.0)$   &0.51   $\pm$ 0.21  &2.57   $\pm$ 0.23      & 1.52 $\pm$ 0.76&0.08  $\pm$ 0.01  \\ \hline
$1-\widehat{c}\ (\gamma=0.1)$&0.38   $\pm$ 0.04  &0.39   $\pm$ 0.07      & 0.34 $\pm$ 0.02&0.18  $\pm$ 0.20  \\
$1-\widehat{c}\ (\gamma=0.5)$&0.32   $\pm$ 0.06  &0.43   $\pm$ 0.10      & 0.00 $\pm$ 0.00&0.17  $\pm$ 0.21  \\
$1-\widehat{c}\ (\gamma=1.0)$&0.12   $\pm$ 0.19  &0.26   $\pm$ 0.28      & 0.01 $\pm$ 0.09&0.02  $\pm$ 0.09  \\
\end{tabular}
 \caption{
 The numerical results on benchmark datasets are presented. 
 The training samples size is 100, and the contamination ratio $1-c_0$ is set to $0.05, 0.2$ or 
 $0.4$. Both the independent and dependent variables $(x,y)$ are contaminated by the outliers. 
 For each estimator, we employed the linear regression model with the intercept, and computed
 the averaged RMSE over 100 iterations. Also the estimated contamination ratio is presented. 
 }
 \label{tbl:benchmark-xy_out}
\end{table}

\section{Conclusion}
\label{sec:Conclusion}
In this paper, the robust statistical inference under heavy contamination is studied. 
In order to estimate not only the model parameter but also the contamination ratio, 
scoring rules such as the density-power score or pseudo-spherical score 
are applied with enlarged models. 
The proposed method is used for regression problems. 
Even under heterogeneous contamination, the proposed method with the location-scale
model provides an estimate of the expected contamination ratio besides a robust estimator
of the target model parameter. 
Using the estimator of the contamination ratio, 
one can identify the outliers out of the observed samples. 
Numerical experiments showed the effectiveness of our approach. 

As shown in~\cite{maronna06:_robus_statis,yu12:_polyn_form_robus_regres}, 
the convex loss function does not provide strong robustness to heavy contamination. 
This fact makes the optimization in the robust estimation harder. 
In the numerical experiments, the multi-start strategy is used 
as well as the other robust estimators. 
For the clipped loss function, Yu et al, \cite{yu12:_polyn_form_robus_regres} proposed the
relaxation approach for efficient computation. 
This approach is not directly available to our methods, 
since the loss functions proposed in this paper are not expressed as the form of the
clipped loss. 
A future work is to study numerical algorithms that are specialized for robust statistical
inference.

\appendix

\section{Preliminaries of Scoring Rules}
\label{appendix:preliminaries}
The density-power score and pseudo-spherical score are described as a special case of 
the H\"{o}lder score \eqref{eqn:Holder-score}. Indeed, 
the density-power score is derived from $\phi(z)=\gamma-(1+\gamma)z$, and the pseudo-spherical
score is derived from $\phi(z)=-z^{1+\gamma}$.  
First, we prove the inequality for H\"{o}lder score, $S_\phi(f,g)\geq{}S_\phi(f,f)$.
Then, we show the condition the equality for each score. 

Given non-negative functions $f$ and $g$, H\"{o}lder's inequality leads to 
\begin{align*}
 \<fg^\gamma\>\leq \<f^{1+\gamma}\>^{1/(1+\gamma)}\<g^{1+\gamma}\>^{\gamma/(1+\gamma)}
\end{align*}
for $\gamma>0$. The equality holds if and only if $f$ and $g$ are linearly dependent. 
From the inequality $\phi(z)\geq-z^{1+\gamma}$ for $z\geq0$, we have 
\begin{align*}
 S_\phi(f,g)-S_\phi(f,f)
 &=
 \phi\left(\frac{\<fg^\gamma\>}{\<g^{1+\gamma}\>}\right)\<g^{1+\gamma}\>+\<f^{1+\gamma}\>\\
 &\geq
 -\left(\frac{\<fg^\gamma\>}{\<g^{1+\gamma}\>}\right)^{1+\gamma} \<g^{1+\gamma}\>+\<f^{1+\gamma}\>\\
 &\geq0. \qquad\qquad\qquad\text{(H\"{o}lder's inequality)}
\end{align*}
Hence, the property of pseudo-spherical score was shown. 
For the density-power score, suppose that $S_{\mathrm{power}}(f,g)=S_{\mathrm{power}}(f,f)$ holds. 
Then, the inequalities in the above should become equality. 
The equality condition of H\"{o}lder's inequality leads that 
$f$ and $g$ are linearly dependent. For the function $\phi(z)$ of the density-power score, 
$\phi(z)=-z^{1+\gamma}$ holds only when $z=1$. Hence, 
$\<fg^\gamma\>/\<g^{1+\gamma}\>=1$ should hold. 
For non-negative and non-zero linearly dependent functions $f$ and $g$, 
the equality $\<fg^\gamma\>/\<g^{1+\gamma}\>=1$ leads to $f=g$.

\section{Proofs of Lemma~\ref{lemma:pseudo-spherical-Holder} and Theorem~\ref{theorem:Holder-min-alg}}
\label{appendix:proof_opt_sol}
\begin{proof}
 [Proof of Lemma~\ref{lemma:pseudo-spherical-Holder}]
 As defined in Section \ref{subsec:Holder}, 
 the function $\phi$ in the H\"{o}lder score satisfies $\phi(z)\geq-z^{1+\gamma}$. 
 Thus, we have $\psi_u(z)\geq-u^{1+\gamma}$ for $z>0$. 
 Since the inequality $\phi(z)>-z^{1+\gamma}$ is assumed for $z\neq-1$, 
 the equality $\psi_u(z)=-u^{1+\gamma}$ is satisfied only when $z=u$. 
 Hence, we have, 
 \begin{align*}
  S_\phi(\widetilde{p},cp_{\theta})
  =
  \psi_{c(\theta)}(c)\<p_\theta^{1+\gamma}\>
  \geq
  -\<p_\theta^{1+\gamma}\>c(\theta)^{1+\gamma}, 
 \end{align*}
 and the equality holds only for $c=c(\theta)$. 
 In addition, $\psi_{c(\theta)}(c)$ is assumed to be strictly decreasing 
 on the open interval $c\in(0,c(\theta))$. 
 If $c(\theta)\leq1$ holds, clearly $c=c(\theta)$ is the optimal solution of 
 \eqref{eqn:unconstraint-Holder-opt}. Otherwise, 
 $c=1$ is optimal, since the inequality $1<c(\theta)$ assures that 
 $S_\phi(\widetilde{p},cp_\theta)$ is strictly decreasing with respect to $c$ over the
 interval $(0,1]$. 
 In summary, the optimal solution is expressed as $c=\min\{1,c(\theta)\}$. 
\end{proof}

\begin{proof}
 [Proof of Theorem~\ref{theorem:Holder-min-alg}]
 When $\widehat{c}=1$ holds, the statement of the theorem is clear. 
 Let us suppose that $0<\widehat{c}<1$ holds. 
 Note that the equality
 \begin{align}
  \label{eqn:Holder-pseudosphere-relation}
  S_\phi(\widetilde{p},c(\theta)p_\theta)=-(-S_{\mathrm{sphere}}(\widetilde{p},p_\theta))^{1+\gamma}
 \end{align}
 holds for $\theta\in\Theta$. 
 Due to Lemma~\ref{lemma:pseudo-spherical-Holder}, 
 $\widehat{c}=c(\widehat{\theta})$ should hold, 
 because under the assumptions of Lemma~\ref{lemma:pseudo-spherical-Holder}, 
 the optimal value of $c$ should be expressed as $\min\{1,c(\theta)\}$ for each
 $\theta\in\Theta$. 
 Let $\mathcal{N}\subset\Theta$ 
 be an open neighborhood of $\widehat{\theta}$ such that 
 $0<c(\theta)<1$ holds for all $\theta\in\mathcal{N}$. 
 Then, we have
 \begin{align*}
  \min_{c\in(0,1],\,\theta\in\Theta} S_\phi(\widetilde{p},cp_\theta)
  =  S_\phi(\widetilde{p},c(\widehat{\theta})p_{\widehat{\theta}})
  =  \min_{\theta\in\mathcal{N}}S_\phi(\widetilde{p},c(\theta)p_{\theta})
 \end{align*}
 Due to the equality \eqref{eqn:Holder-pseudosphere-relation}, 
 the minimization of $S(\widetilde{p},c(\theta)p_\theta)$
 on $\mathcal{N}$ is identical to 
 the minimization of the pseudo-spherical score $S_{\mathrm{sphere}}(\widetilde{p},p_\theta)$ 
 on  $\mathcal{N}$. 
 Therefore, $\widehat{\theta}$ is a local optimal solution of \eqref{eqn:min-pseudospherical}. 
 Generally, the set $\mathcal{N}$ cannot be replaced with $\Theta$, since
 the optimal solution of  $\min_{\theta\in\Theta} S(\widetilde{p},c(\theta)p_\theta)$ 
 may not satisfy the constraint $c(\theta)\leq1$. 
\end{proof}

\section{Proof of Theorem~\ref{theorem:small-bias}}
\label{appendix:small-bias}
\begin{proof}
 The point $\xi_0=(c_0,\theta_0)$ is the unique minimizer of $f_0(\xi)$. 
 In the same way as the proof of Theorem~\ref{theorem:Holder-min-alg}, 
 we can prove that the point $\xi_1=(c_1,\theta_1)$ is also the minimizer of
 $f_1(\xi)=S_{\mathrm{power}}(p,cp_\theta)$. 
 For $\xi=(c,\theta)$, the equality \eqref{eqn:diff-power-contamination} leads to 
 \begin{align*}
  f_1(\xi)\leq{}f_0(\xi)\leq{}f_1(\xi)+(1+\gamma)\varepsilon_\theta, 
 \end{align*}
 where the constraint $c_0, c\in(0,1]$ is used to derive the second inequality. 
 Then, $f_1(\xi_1)\leq{}f_0(\xi_0)$ should hold, since
 $f_1(\xi_1)\leq{}f_1(\xi_0)\leq{}f_0(\xi_0)$. 
 In addition, we have 
 \begin{align*}
  f_0(\xi_0)\leq{}f_0(\xi_1)\leq{}f_1(\xi_1)+(1+\gamma)\varepsilon_1, 
 \end{align*}
 implying that $\xi_0, \xi_1\in\mathcal{N}$. Moreover, we obtain 
 \begin{align*}
  f_0(\xi_1)-(1+\gamma)\varepsilon_1\leq{}f_1(\xi_1)\leq{}f_0(\xi_0). 
 \end{align*}
 Taylor expansion of $f_0(\xi_1)$ around $\xi_0$ and the assumption on the Hessian matrix 
 yield that
 \begin{align*}
  f_0(\xi_0)+\frac{\delta}{2}\|\xi_0-\xi_1\|^2-(1+\gamma)\varepsilon_1\leq{}f_0(\xi_0). 
 \end{align*}
 Therefore, $\|\xi_0-\xi_1\|=O(\varepsilon_1^{1/2})$ holds. 
\end{proof}

\section{Proof of Theorem~\ref{theorem:asymptotic_distribution}}
\label{appendix:asymptotics}
\begin{proof}
[Proof of the first statement in Theorem~\ref{theorem:asymptotic_distribution}]\ 
 Suppose $0<c_0<1$. 
 Since $c(\widetilde{\theta})$ is assumed be a $\sqrt{n}$-consistent estimator of
 $c_1$,  
 the large deviation theory assures that the inequality
 $c(\widetilde{\theta})<1$ holds with the probability more than $1-e^{-\alpha{}n}$, 
 where $\alpha$ is a positive constant. 
 Therefore, the constraint $c\leq1$ in 
 \eqref{eqn:Holder-multiplicativemodel-opt} does not affect the asymptotic distribution 
 of the estimator $\widehat{\xi}=(\widehat{c},\widehat{\theta})$. 
 When $c(\widetilde{\theta})<1$, the estimator
 given by 
 $(\widehat{c},\widehat{\theta})=(c(\widetilde{\theta}),\widetilde{\theta})$
 does not depend on the choice of the function $\phi$, as shown in
 Theorem~\ref{theorem:Holder-min-alg}. 
 The density-power score is employed to calculate the asymptotic distribution of
 the estimator. 
 Note that the function $\phi$ of the density-power score satisfies 
 the assumptions in the theorem. 
 Suppose that the density-power score $S_{\mathrm{power}}(p,cp_\theta)$ is expressed as 
 \begin{align*}
  S_{\mathrm{power}}(p,cp_\theta)=\int{}p(x)\ell(x,cp_\theta)dx. 
 \end{align*}
 The minimum solution is $\xi_1=(c_1,\theta_1)$. 
 The asymptotic theorem of the M-estimator shows that the asymptotic distribution of 
 $\sqrt{n}(\widehat{\xi}-\xi_1)$ is the multivariate normal distribution with the mean 
 zero and variance-covariance matrix 
 $\Sigma_p=J_p^{-1}K_pJ_p^{-1}, $
 where the $d+1$ by $d+1$ matrices $J_p$ and $K_p$ are given as 
 \begin{align*}
  (J_p)_{ij}
  &\phantom{:}=
  \bigg\<p\cdot\frac{\partial^2}{\partial\xi_i\partial\xi_j}
  \ell(\cdot,cp_\theta)\bigg\>\bigg|_{\xi=\xi_1},\\
  \quad
  (K_p)_{ij}
  &\phantom{:}=
  \mathrm{Cov}_p\left[
  \frac{\partial}{\partial\xi_i}\ell(X,cp_\theta), 
  \frac{\partial}{\partial\xi_j}\ell(X,cp_\theta)
  \right]\bigg|_{\xi=\xi_1}\\
  &:=
  \bigg\<p\cdot
  \frac{\partial}{\partial\xi_i}\ell(\cdot,cp_\theta)
  \frac{\partial}{\partial\xi_j}\ell(\cdot,cp_\theta)
  \bigg\>
  -
  \bigg\<p\cdot  \frac{\partial}{\partial\xi_i}\ell(\cdot,cp_\theta)  \bigg\>
  \bigg\<p\cdot  \frac{\partial}{\partial\xi_j}\ell(\cdot,cp_\theta)  \bigg\>\bigg|_{\xi=\xi_1}.
\end{align*}
 Let $s_{\theta,i}$ be $\frac{\partial}{\partial\theta_i}\log{p_\theta(x)}$, and 
 $s_{\theta,ij}$ be
 $\frac{\partial^2}{\partial\theta_i\partial\theta_j}\log{p_\theta(x)}$, 
 and let $\Theta_0$ be a convex subset of $\Theta$ such that $\theta_1,\theta_0\in\Theta_0$.
Let us define 
$\bar{\varepsilon}_{\theta,ij}$ as
\begin{align*}
\bar{\varepsilon}_{\theta,ij}=
 \max\{
 \varepsilon_\theta,
\<wp_{\theta}^{2\gamma}\>,  
\<wp_{\theta}^{\gamma}|s_{\theta,i}|\>,  
\<wp_{\theta}^{2\gamma}|s_{\theta,i}|\>, 
\<wp_{\theta}^{\gamma}|s_{\theta,i}s_{\theta,j}|\>,  
\<wp_{\theta}^{2\gamma}|s_{\theta,i}s_{\theta,j}|\>, 
\<wp_{\theta}^{\gamma}|s_{\theta,ij}|\> 
 \}, 
\end{align*}
 and let 
 $\bar{\varepsilon}=\sup\{\bar{\varepsilon}_{\theta,ij}\,|\,i,j=1,\ldots,d,\,\theta\in\Theta_0\}$. 
 Using $\varepsilon_1=\varepsilon_{\theta_1}\leq\bar{\varepsilon}$, 
 we have $\|\xi_0-\xi_1\|=O(\bar{\varepsilon}^{1/2})$. Then, we obtain
\begin{align*}
 (J_p)_{ij}
 =
 (J_0)_{ij} +O(\bar{\varepsilon}^{1/2}),\qquad
 (K_p)_{ij}
 =
 (K_0)_{ij} +O(\bar{\varepsilon}^{1/2}), 
\end{align*}
 where the $(1+d)$ by $(1+d)$ matrices $J_0$ and $K_0$ are defined as 
\begin{align*}
 J_0=
 \bigg\<c_0p_{\theta_0}
 \cdot\frac{\partial^2}{\partial\xi_i\partial\xi_j}
 \ell(\cdot,cp_\theta)\bigg\>\bigg|_{\xi=\xi_0},
 \quad
 K_0=
 \mathrm{Cov}_{c_0p_{\theta_0}}\left[
  \frac{\partial}{\partial\xi_i}\ell(X,cp_\theta), 
  \frac{\partial}{\partial\xi_j}\ell(X,cp_\theta)
  \right]\bigg|_{\xi=\xi_0}. 
\end{align*} 
 In the above, we assumed that 
 the derivatives of $\<wp_\theta^\gamma\>$ and $\<wp_\theta^{2\gamma}\>$ 
 up to the third order on $\Theta_0$ are uniformly
 bounded by an integrable function. 
 As a result, the asymptotic variance-covariance matrix is given as 
 $\Sigma_p=\Sigma_{\xi_0}+O(\bar{\varepsilon}^{1/2})$, where
 $\Sigma_{\xi_0}$ is defined as $\Sigma_{\xi_0}=J_0^{-1}K_0J_0^{-1}$. 
\end{proof}

\begin{proof}
 [Proof of the second statement in Theorem~\ref{theorem:asymptotic_distribution}]\ 
 Suppose $c_0=1$. In this case, there is no outliers, and $p=p_0=p_{\theta_0}$ holds. 
 Hence, under the regularity conditions, 
 $(c(\widetilde{\theta}),\widetilde{\theta})$ converges to $\xi_0=(1,\theta_0)$
almost surely, and $\bar{\theta}$ also converges to $\theta_0$ almost surely. 
The asymptotic behaviour of $(c(\widetilde{\theta}),\widetilde{\theta},\bar{\theta})$ is 
obtained by using the asymptotic expansion. 
Let us define $\s_\theta$ as the $\Rbb^d$-valued score function
$\frac{\partial}{\partial\theta}\log{p_\theta(x)}$, and let $u$ and $\v$ be 
\begin{align*}
  u&=\frac{1}{\sqrt{n}}\sum_{i=1}^{n}\{p_{\theta_0}(x_i)^\gamma-\<p_{\theta_0}^{1+\gamma}\>\},\\
 \v&=\frac{1}{\sqrt{n}}
 \sum_{i=1}^{n}\{p_{\theta_0}(x_i)^\gamma\s_{\theta_0}(x_i)-\<p_{\theta_0}^{1+\gamma}\s_{\theta_0}\>\}. 
\end{align*}
 Then, the asymptotic expansion of the estimating equation for 
 $c(\widetilde{\theta}), \widetilde{\theta}$ and $\bar{\theta}$ yields that 
\begin{align*}
 \sqrt{n}
 \begin{pmatrix}  c(\widetilde{\theta})-1\\ \widetilde{\theta}-\theta_0 \end{pmatrix}
 &=\begin{pmatrix}
   \<p_{\theta_0}^{1+\gamma}\>     & \<p_{\theta_0}^{1+\gamma}\s_{\theta_0}^T\>\\ 
   \<p_{\theta_0}^{1+\gamma}\s_{\theta_0}\> & \<p_{\theta_0}^{1+\gamma}\s_{\theta_0}\s_{\theta_0}^T\>
   \end{pmatrix}^{-1}
 \begin{pmatrix}  u\\ \v \end{pmatrix}+o_p(1),\\
 \sqrt{n}(\bar{\theta}-\theta_0)
 &=
 \left(
 \<p_{\theta_0}^{1+\gamma}\s_{\theta_0}\s_{\theta_0}^T\>
 +\frac{\phi''(1)}{\gamma(1+\gamma)\<p_{\theta_0}^{1+\gamma}\>}
 \<p_{\theta_0}^{1+\gamma}\s_{\theta_0}\>\<p_{\theta_0}^{1+\gamma}\s_{\theta_0}\>
 \right)^{-1}\\
 &\phantom{=}\times
 \left( \frac{\phi''(1)}{\gamma(1+\gamma)}
 \frac{\<p_{\theta_0}^{1+\gamma}\s_{\theta_0}\>}{\<p_{\theta_0}^{1+\gamma}\>}u+\v \right)+o_p(1). 
\end{align*}
 The asymptotic expansion of $\sqrt{n}(\bar{\theta}-\theta_0)$
 is shown in the proof of Theorem~5 in~\cite{kanamoriar:_affin_invar_diver_compos_scores_applicy}. 
 The asymptotic probability densities of 
 $\sqrt{n}(c(\widetilde{\theta})-1,\,\widetilde{\theta}-\theta_0)$
 and 
 $\sqrt{n}(c(\widetilde{\theta})-1,\,\bar{\theta}-\theta_0)$
 are respectively
 denoted as $p(z_0,\z_1)$ and $p(z_0,\z_2)$ for $z_0\in\Rbb,\,\z_1,\z_2\in\Rbb^d$, 
 in which the notation is overloaded. 
 Under the regularity condition, 
 $p(z_0,\z_1)$ and $p(z_0,\z_2)$ are $(1+d)$-dimensional normal distributions 
 with the mean zero. 
 Let us define $\widetilde{p}$ and $\bar{p}$ as 
 \begin{align*}
  \widetilde{p}(z_0,\z_1) =  2\,p(z_0,\z_1)\1[z_0\leq0],\qquad
  \bar{p}(\z_2) = 2\!\int{\!}\,p(z_0,\z_2)\1[z_0>0]dz_0, 
 \end{align*}
 where the indicator function $\1[A]$ takes $1$ if $A$ is true, and $0$ otherwise. 
 Informally, 
 $\widetilde{p}(z_0,\z_1)$ denotes the conditional probability density 
 $p(\z_0,\z_1|z_0\leq0)$, and 
 $\bar{p}(\z_2)$ denotes $p(\z_2|z_0>0)$. 
 The symmetry of the distribution assures that 
 the asymptotic probability such that $\sqrt{n}(c(\widetilde{\theta})-1)\leq0$  
 is equal to $1/2$. 
 Hence, the asymptotic probability density of 
 $\sqrt{n}(\widehat{\xi}-\xi_0)$
 is expressed as 
\begin{align*}
 \frac{1}{2}\widetilde{p}(z_0,\z)+\frac{1}{2}\delta(z_0) \bar{p}(\z). 
\end{align*}
 The first term is equal to $\phi_{1+d}(z_0,\z;\Sigma_{\xi_0})\1[z_0\leq0]$, that
 corresponds to the distribution of the estimator
 $(c(\widetilde{\theta}),\widetilde{\theta})$ in the case of 
 $c(\widetilde{\theta})\leq1$. 
 The second term corresponds to the distribution of the estimator $(1,\bar{\theta})$ in
 the case of $c(\widetilde{\theta})>1$.  The density $\bar{p}(\z)$ is expressed as the
 $d$-dimensional normal distribution with the mean zero and the variance-covariance matrix
 $\Lambda_{\theta_0}$ that is determined from the asymptotic expansions and the integral
 in the above. 
\end{proof}

\bibliographystyle{plain}

\end{document}